\newcommand{\bm}{\mathbf}
\newtheorem{assumption}{Assumption}
\begin{document}

\title{A new method based on the bundle idea and gradient sampling technique for minimizing nonsmooth convex functions
}

\titlerunning{A bundle-gradient sampling method for minimizing nonsmooth convex functions}        

\author{M. Maleknia   \and  M. Shamsi }

\institute{Morteza Maleknia (ORCID iD: 0000-0001-8576-3119) \at
	Amirkabir University of Technology\\ 
	Tehran, Iran\\
	\email{m.maleknia@aut.ac.ir}
	\and
	M. Shamsi (ORCID iD: 0000-0002-3806-9316),  Corresponding author  \at
	Amirkabir University of Technology\\ 
	Tehran, Iran\\
	\email{m\_shamsi@aut.ac.ir}
}

\date{Received: date / Accepted: date}

\maketitle

\begin{abstract}
In this paper, we combine the positive aspects of the Gradient Sampling (GS) and bundle methods, as the most efficient methods in nonsmooth optimization, to develop a robust method for solving unconstrained nonsmooth convex optimization problems.
The main aim of the proposed method is to  take advantage of both GS and bundle methods, meanwhile avoiding their drawbacks.
At each iteration of this method, to find an efficient descent direction, the GS technique is utilized for constructing a local polyhedral model for the objective function. 
If necessary, via an iterative improvement process, this initial polyhedral model is improved  by some techniques inspired by the bundle and GS‌ methods. 
The convergence of the method is studied, which reveals the following positive features
(i) The convergence of our method is independent of the number of gradient evaluations required to establish and improve the initial polyhedral models. Thus, the presented  method needs much fewer gradient evaluations in comparison to the original GS method.
(ii) As opposed to GS type methods,  the objective function need not be continuously differentiable on a   full measure open set in $\mathbb{R}^n$ to ensure the convergence for the class of convex problems.
Apart from the mentioned advantages, by means of numerical simulations, we show that the presented method provides promising results in comparison with GS methods, especially for large scale problems. 
Moreover, in contrast with bundle methods, our method is not very sensitive to the accuracy of supplied gradients.
\keywords{Nonsmooth convex optimization \and Unconstrained minimization \and Gradient sampling \and Bundle method}
\subclass{65K05 \and 90C26 \and 49M05 \and 49M37}
\end{abstract}

\section{Introduction}
In this paper, we consider the following minimization problem 

\begin{equation}\label{MAIN-PROBLEM}
\min f(\bm x) \quad \text{s.t.} \quad \bm x\in \mathbb{R}^n,
\end{equation}
where $f:\mathbb{R}^n\to\mathbb{R}$ is a convex function, but not necessarily differentiable. Such problems appear in many applied fields, such as data analysis, image processing, optimal control, biology, chemistry, and computational physics \cite{Makela_book,image,Poly-approx,bio-ch2,data-mining}. In this regard, it is worthwhile to develop efficient methods for solving these types of problems.

There are various methods for locating a minimizer of problem \eqref{MAIN-PROBLEM}. The subgradient method, initially developed by N. Shor \cite{shorbook}, is one of the simplest methods for solving a nonsmooth convex optimization problem. Because of its simple structure, it is still a widely used method, although it suffers from some serious drawbacks, such as lack of descent, slow convergence, and lack of practical termination criterion.

As another class of methods for solving nonsmooth optimization problems, we can refer to bundle methods as one of the most efficient methods in nonsmooth optimization, which can handle both convex and nonconvex objectives. The basic idea behind these methods is to keep the memory of the previously computed subgradients to build up a \emph{polyhedral} (piecewise linear) model for the objective function. If this polyhedral model is an adequate approximation to the objective function, by solving a quadratic subproblem, one can obtain a descent direction, which makes a substantial reduction in the objective function, and consequently, a \emph{serious step} occurs. Otherwise, the method accepts a \emph{null step} to enrich the polyhedral model by gathering more subgradient information. One drawback of the bundle type methods is that, at each iteration, a subgradient is added to the bundle of information, which posses serious difficulties with storage and the size of the corresponding quadratic subproblem. To overcome this difficulty, in \cite{Aggregate-sub}, Kiwiel developed the concept of \emph{aggregated subgradient}, through which one can control the size of the bundle, and as a result, keep the number of constraints in the quadratic subproblems bounded. Currently, the aggregation strategy is the cornerstone of many efficient methods in nonsmooth optimization (see, for instance \cite{Makela_book,kiwielbook,BT-method,Haarala,Proximal-Bundle,Haarala,Bundle-semidefinite}). In particular, based on this strategy, Schramm and Zowe developed one of the most effective variants of bundle methods, namely the Bundle Trust (BT) method \cite{BT-method}. The BT method is in continuation of the works of Kiwiel in \cite{Aggregate-sub} and \cite{Proximal-Bundle}, which adds some properties of the trust region philosophy to the bundle concept. Numerous numerical experiments have confirmed the efficiency of this method for solving a broad range of nonsmooth optimization problems (see, for instance \cite{BT-method,Bagirov2014,Zowe-book}).

Generally, in the class of subgradient and bundle methods, after a few numbers of iterations, the method approaches a nonsmooth curve. In this situation, only accurate subgradients can  contribute to the efficiency of the method. This fact is a major limitation for these methods, especially when we deal with real-life and black-box problems, in which  supplying only one accurate subgradient is not an easy task.
To resolve this drawback, Burke et al. \cite{Burke2005}  developed the Gradient Sampling (GS) method to solve a broad range of nonsmooth optimization problems without explicit computation of subgradients. 
Soon after, the original GS approach  was considered as a ground for developing various types of GS methods (see \cite{Curtis2013,Curtis2012,Fast-GS,GS-full,CG-GS,Kiwiel2007,Kiwiel2010,Diff-Check}).
The GS methods can deal with a variety of nonsmooth problems using only approximate gradients. Therefore, whenever function evaluations are not expensive, the GS type methods can be the algorithm of choice. 

In the family of the GS methods, through a uniform distribution,  a set of auxiliary points is randomly generated from an $\varepsilon$-neighborhood of the current point. 
We know that, with probability 1, a uniform random choice of points gives a set of differentiable points in $\mathbb{R}^n$. Moreover, due to the uniform distribution, the set of the sampled points likely lies on the opposite sides of the discontinuity of the gradient map.  
Consequently, significant knowledge of the nearby nonsmooth curve is obtained by computing the gradient of the objective function on the sampled points. Then, the GS type methods use this key knowledge to create an efficient  decent direction in order to make a remarkable progress towards a stationary point.

The original GS method \cite{Burke2005,GS-full} is a descent method to minimize locally Lipschitz functions, which are continuously differentiable on a full measure open set in $\mathbb{R}^n$. This method, at each iteration, tries to approximate the $\varepsilon$-steepest descent direction. For this purpose, it evaluates the gradient of $f$ at some randomly generated points, and then the convex hull of this information is used to obtain an approximation of $\varepsilon$-steepest descent direction. The convergence theory of the GS method was first developed in \cite{Burke2005}. Through some subtle modifications, Kiwiel provided stronger convergence results in \cite{Kiwiel2007}.
Soon after, these results became the theoretical foundation of many variants of the GS type methods \cite{Curtis2012,Curtis2013,Diff-Check,Kiwiel2010}. The GS methods are robust and can be applied to a wide range of nonsmooth problems, including non-Lipschitz functions. However, to observe a good practical behavior, we may need to increase the size of the sample to $2n$, which makes these methods computationally extensive for solving large scale problems. 

In this paper, using the GS technique, we propose a version of bundle methods in which no explicit computation of  (sub)gradient information is needed. 
The main idea of our method is to use the GS technique to construct a local polyhedral approximation for the objective function $f$. 
For this purpose, at each iteration,  a region around the current point, namely \emph{sampling region}, is defined and a set of points is independently and uniformly sampled from this region. 
Then, we evaluate the gradient of the objective function on the sampled points to obtain a local polyhedral approximation to the objective function.
Indeed, the sampled points can be considered as auxiliary points that are used to build up our initial local approximation to $f$. 
Using this initial model, a quadratic subproblem is established  and solved to generate a search direction. 
If there is a good agreement between the objective function and model on the sampling region, then the generated search direction leads to a reduction in the objective function.
However, due to the kinky structure of nonsmooth functions, this initial model may not be  well enough to generate an efficient descent direction, and consequently, we may need to improve the quality of the model. To this end, we perform an iterative process, namely the \emph{improvement process}, in which we improve the quality of the initial model either by making the sampling region smaller or by gathering more gradient information. In addition, to control the size of the subproblems throughout the improvement process, we use the \emph{aggregation strategy} proposed by Kiwiel in \cite{Aggregate-sub}. 
Furthermore, an adaptive gradient selection strategy is proposed to preserve the most active gradients throughout the improvement procedure.
As opposed to the bundle methods, once a serious step occurs, we do not save the gradient information computed at the previous iteration. Indeed, at the next iteration, we construct a new local polyhedral model using a completely new set of sampled points. 

The presented method inherits the advantageous properties of GS and bundle methods; at the same time, it circumvents the drawbacks of these methods. 
In comparison with GS type methods, our method requires much fewer gradient evaluations. 
More precisely, only $\mathcal{O}(1)$ gradient evaluation is sufficient to establish the initial polyhedral model and, if necessary, a single gradient evaluation at each iteration of the improvement process. 
As an impressive feature of our method, as opposed to bundle methods, the gradients can be supplied using simple difference approximation formulas. As a consequence, the method can efficiently solve those problems for which explicit computation of (sub)gradients is cumbersome.
Moreover, the proposed algorithm offers some user-defined parameters, which allow the user to customize the method based on the structure of a problem.

This paper is organized as follows. Section \ref{Preliminaries} provides some mathematical preliminaries used in this article. A comprehensive description of the method is presented in Section \ref{Method-Description}. The convergence analysis of the proposed method is studied in Section \ref{Convergence analysis}. We report the results of numerical experiments in Section \ref{Numerical results}, and Section \ref{Conclusion} concludes the paper.

\section{Preliminaries }\label{Preliminaries}
The following notations are used in this article. $\mathbb{R}^n$ stands for the $n$-dimensional Euclidean space and its inner product is denoted by $\langle \bm x, \bm y\rangle:=\sum_{i=1}^n x_iy_i$, which induces the Euclidean norm $\lVert\bm x\rVert:=\langle\bm x,\bm x\rangle^{1/2}$. Moreover, $B(\bm x, \varepsilon):=\{\bm y\in\mathbb{R}^n \,:\, \lVert\bm x-\bm y\rVert\leq \varepsilon\}$ defines a closed ball centered at $\bm x$ with the radius $\varepsilon>0$.

The classical directional derivative of a function $f:\mathbb{R}^n\to\mathbb{R}$ at $\bm x\in\mathbb{R}^n$ in the direction $\bm d\in\mathbb{R}^n$ is defined as \cite{Bagirov2014}
$$f'(\bm x;\bm d):=\lim_{t\downarrow 0}\, t^{-1}[f(\bm x+t\bm d)-f(\bm x)]. $$
If $f$ is convex, then for a point $\bm x\in\mathbb{R}^n$, the directional derivative exists in every direction $\bm d\in\mathbb{R}^n$ \cite{Bagirov2014}. Let
$$D:=\{\bm x\in\mathbb{R}^n \,\,:\,\, f\,\  \text{is differentiable at}\, \bm x \}, $$
be a subset of $\mathbb{R}^n$ where the objective function $f$ is differentiable. We recall that every convex function is locally Lipschitz. Therefore, by Rademacher's theorem \cite{Evans2015}, every convex function is differentiable almost everywhere. Thus, whenever we independently and uniformly sample some points from a subset of $\mathbb{R}^n$ with positive measure, the sampled points lie in $D$, with probability 1. This key property plays a vital role in the theory of the GS type methods.

The subdifferential of a convex function $f$ at the point $\bm x\in\mathbb{R}^n$ is given by~\cite{Bagirov2014}
\begin{equation}\label{sub}
\partial f(\bm x):=\{\boldsymbol{\xi} \in\mathbb{R}^n \,:\, f(\bm y)\geq f(\bm x)+\boldsymbol{\xi}^t(\bm y-\bm x)\,\,\,  {\rm{for \, all} }\,\,\, \bm y\in\mathbb{R}^n\}.
\end{equation} 
In addition, for $\epsilon\geq 0$, the $\epsilon$-subdifferential, which is the generalization of the ordinary subdifferential, is defined by \cite{Bagirov2014}
\begin{equation}\label{epsilon-sub}
\partial_{\epsilon} f(\bm x):=\{\boldsymbol{\xi} \in\mathbb{R}^n \,:\, f(\bm y)\geq f(\bm x)+\boldsymbol{\xi}^t(\bm y-\bm x)-\epsilon\,\,\,  {\rm{for \, all} }\,\,\, \bm y\in\mathbb{R}^n\}.
\end{equation} 
For every $\bm x\in\mathbb{R}^n$ and $\epsilon\geq 0$, it is easy to see that
$\partial f(\bm x)\subset \partial_{\epsilon} f(\bm x)$, and for $\varepsilon=0$ we have $\partial f(\bm x)= \partial_{\epsilon} f(\bm x)$.
It is also shown in \cite{Bagirov2014} that for every $\epsilon\geq 0$ the set valued map $\partial_{\epsilon} f:\mathbb{R}^n\rightrightarrows\mathbb{R}^n$ is upper semicontinuous and the set $\partial_{\epsilon} f(\bm x)$ is a nonempty, convex and compact subset of $\mathbb{R}^n$.

Finally, we recall that $\bm x\in\mathbb{R}^n$ is a minimizer for the convex function $f$, if and only if \cite{Bagirov2014}
$$\bm 0\in \partial f(\bm x).$$

\section{The presented Bundle-Gradient Sampling (B-GS) method}\label{Method-Description}
In this section, the general framework of the proposed method is presented. To derive an efficient method for solving nonsmooth convex optimization problems, we develop a bundle based method using the GS technique.

In this method, at each iteration, we define a region around the current point within which some gradient information is sampled to construct a \emph{local polyhedral model} of the objective function. This polyhedral model is considered as a local approximation to the objective function $f$. This model is used to establish a strictly convex \emph{Quadratic optimization Problem} (QP), which its solution provides a search direction. Then, a standard \emph{sufficient decrease condition} checks the efficiency of this direction.  If our polyhedral model is an adequate approximation of the objective function $f$, the generated search direction is an efficient descent direction, which leads to a serious step. Otherwise, we need to enhance the quality of the model. For this purpose, we carry out an \emph{improvement process}, which iteratively enhances the quality of the polyhedral model. At each iteration of this process, two strategies may be used. The first strategy is to enrich the model by adding a new gradient, and the second one is to reconstruct the polyhedral model through a smaller sampling region. To find out which strategy works better, we use a simple criterion that was first introduced in \cite{BT-method}. In addition, to control the size of the QPs in the improvement process, we employ the \emph{aggregation strategy} proposed in \cite{Aggregate-sub}. 

In the rest of this section, different parts of the proposed method are thoroughly described.

\subsection{Constructing a local polyhedral approximation using the GS technique}\label{subsec-3.1}
In this subsection, we employ the GS technique to establish a local polyhedral approximation for the objective function $f$. 
For this purpose, suppose that we are at the $k$-th iteration of the algorithm, and $\mathbf{x}_k\in\mathbb{R}^n$ at which $f$ is differentiable is the current point. Furthermore, let $B(\bm x_k, \varepsilon_k)$ be the current sampling region, where $\varepsilon_k>0$ is called \emph{sampling radius}. Then, for a given \emph{sample size} $m\in\mathbb{N}$, we sample $\bm{s}_{k,1},\ldots,\bm s_{k,m}$ independently and uniformly from the sampling region $B(\bm x_k,\varepsilon_k)$.
As discussed in Section \ref{Preliminaries}, the objective function $f$ is differentiable at the sampled points $\bm{s}_{k,1},\ldots,\bm s_{k,m}$, with probability 1. Therefore, we proceed with assuming that the gradient of $f$ is defined at the sampled points. Moreover, due to some technical reasons, we add $\bm s_{k,0}:=\bm x_k$ to the set of sampled points. Now, using the gradient information at these points, one can define the following linearizations 
\begin{equation}\label{linearizations}
f_{k,j}(\bm x):=f(\bm s_{k,j})+\bigl\langle\nabla f(\bm s_{k,j}), \bm x- \bm s_{k,j}\bigr\rangle, \qquad j=0,1,\ldots,m.
\end{equation}
Let $e_{k,j}$ be the error of the $j$-th linearization at the current point $\bm x_k$, i.e.,
\begin{align}
e_{k,j}:&=f(\bm x_k)-f_{k,j}(\bm x_k)\nonumber\\&
=f(\bm x_k)-\left[f(\bm s_{k,j})+\bigl\langle\nabla f(\bm s_{k,j}), \bm x_k- \bm s_{k,j}\bigr\rangle\right], \qquad j=0,1,\ldots,m. \label{error-of-linearizations}
\end{align}
Note that, convexity of $f$ implies that $e_{k,j}\geq 0$ for every $j=0,1,\ldots,m$. Furthermore, since $\bm s_{k,0}=\bm x_k$, it is easy to see that $e_{k,0}=0$. Because of convexity of $f$, for every $j=0,1,\ldots,m$, we can write
$$f(\bm x)\geq f(\bm s_{k,j})+ \bigl\langle \nabla f(\bm s_{k,j}), \bm x-\bm s_{k,j}\bigr\rangle, \qquad \text{for all} \,\, \bm x\in\mathbb{R}^n, $$
and by adding $e_{k,j}-\left[f(\bm x_k)-f(\bm s_{k,j})-\bigl\langle\nabla f(\bm s_{k,j}), \bm x_k- \bm s_{k,j} \bigr\rangle\right]=0$ to the above inequality, we conclude
\begin{equation}\label{e-nabla-inequality}
f(\bm x)\geq f(\bm x_k)+ \bigl\langle \nabla f(\bm s_{k,j}), \bm x-\bm x_k\bigr\rangle -e_{k,j}, \qquad \text{for all} \,\, \bm x\in\mathbb{R}^n,
\end{equation}
which means that
\begin{equation}\label{e-nabla-relation}
\nabla f(\bm s_{k,j})\in\partial_{e_{k,j}} f(\bm x_k), \qquad \text{for all} \,\, j=0,1,\ldots,m. 
\end{equation}
Therefore, the linearization error $e_{k,j}$ indicates how much the gradient $\nabla f(\bm s_{k,j})$ deviates from being a member of $\partial f(\bm x_k)$. 

Using the linearizations \eqref{linearizations}, at the $k$-th iteration of the method, one can  obtain the following  \emph{local polyhedral approximation} to $f$ 
\begin{equation}\label{poly-model}
f^k_p(\bm x):= \max\{f_{k,j}(\bm x)\,\, : \,\, j=0,1,\ldots,m\}.
\end{equation}
Thanks to convexity of $f$, one can see that $f^k_p$ is a lower approximation for the objective function $f$, i.e.,
$$f^k_p(\bm x)\leq f(\bm x), \qquad \text{for all}\,\, \bm x\in\mathbb{R}^n. $$
We emphasize that $f^k_p(\bm x)$ is a poor approximation to $f(\bm x)$ when $\lVert \bm x-\bm x_k\rVert$ is large, because the gradient information $\nabla f(\bm s_{k,1}),\ldots,\nabla f(\bm s_{k,m})$ are limited to the sampling region $B(\bm x_k,\varepsilon_k)$. This gives the reason why the term ``\textit{local}'' is used. We note that the sample size $m\in\mathbb{N}$ affects the quality of polyhedral model~\eqref{poly-model}. Generally, using a large sample size enhances the quality of this model but at the cost of increased storage and computation time.

\subsection{Search direction finding subproblem and sufficient decrease condition}\label{subsec-3.2}
Using the polyhedral model \eqref{poly-model}  to find a descent direction for the objective function $f$ at the current point $\bm x_k$, leads to the following \emph {search direction finding subproblem}
\begin{align}\label{Q_sub}
\begin{split}
&\min f^k_p(\bm x_k+\bm d) \\& \text{s.t.} \quad \frac{1}{2}\lVert\bm d\rVert^2\leq \varepsilon_k. 
\end{split}
\end{align}
Note that, the quadratic constraint in the above problem restricts our minimization to the sampling region $B(\bm x_k,\varepsilon_k)$, where we expect our model to be an adequate approximation of the objective function $f$. However, solving subproblem \eqref{Q_sub} accurately at each iteration of the method can be a time-consuming process. Fortunately, as we will show, only an \emph{approximate solution} ensures convergence and nice practical behavior. In this regard, we move the quadratic constraint to the objective function through a proportional penalty parameter, i.e., we consider  the following unconstrained minimization subproblem
\begin{align}\label{nonsmooth_sub}
\min_{\bm d\in\mathbb{R}^n} f^k_p(\bm x_k+\bm d)+\frac{1}{2}\varepsilon_k^{-\alpha} \lVert\bm d\rVert^2, 
\end{align}
in which $\alpha$ is a positive real number. Here, we give a quick explanation of the penalty parameter $\varepsilon_k^{-\alpha}$. Firstly, it is proportional to the radius of the sampling region; the smaller the sampling radius, the larger the penalty term. Secondly, the user-defined parameter $\alpha$ controls our sensitivity to the violation of the quadratic constraint. Indeed, since $\varepsilon_k\leq1$ in our implementations, one can be more sensitive to the violation of the quadratic constraint by choosing $\alpha>1$, while $0<\alpha<1$ behaves oppositely. Moreover, one can think of $\alpha=1$ as a neutral parameter.

Using \eqref{linearizations} and \eqref{error-of-linearizations}, one can see that
\begin{align*}
f_{k,j}(\bm x_k+\bm d)&= f(\bm s_{k,j})+\bigl\langle\nabla f(\bm s_{k,j}), \bm x_k+\bm d-\bm s_{k,j}\bigr\rangle\\&
\qquad +\left[f(\bm x_k)-f(\bm x_k)\right]\\&
=\bigl\langle\nabla f(\bm s_{k,j}),\bm d \bigr\rangle -\left[f(\bm x_k)-f(\bm s_{k,j})- \bigl\langle\nabla f(\bm s_{k,j}), \bm x_k-\bm s_{k,j}\bigr\rangle \right] \\&\qquad +f(\bm x_k)\\&
= \bigl\langle\nabla f(\bm s_{k,j}),\bm d \bigr\rangle - e_{k,j}+f(\bm x_k).
\end{align*}
Thus, in view of \eqref{poly-model}, we have 
$$f^k_p(\bm x_k+\bm d)=\max\left\{\bigl\langle\nabla f(\bm s_{k,j}),\bm d\bigr\rangle-e_{k,j}\,\,:\,\,  j=0,1,\ldots,m\right\}+f(\bm x_k), $$
and after dropping the constant $f(\bm x_k)$, subproblem \eqref{nonsmooth_sub} becomes
\begin{equation}
\min_{\bm d\in\mathbb{R}^n} \, \max_{j=0,\ldots,m}\left\{\bigl\langle\nabla f(\bm s_{k,j}),\bm d\bigr\rangle-e_{k,j}\right\}+\frac{1}{2}\varepsilon_k^{-\alpha} \lVert\bm d\rVert^2,
\end{equation}
which is \textit{not} a smooth minimization problem. To resolve this drawback, the following \textit{epigraph form} \cite{Boyd} of this problem is considered
\begin{align}\label{Main_sub}
\begin{split} 
\min_{(z,\bm d)} \ \ &  z+\frac{1}{2}\varepsilon_k^{-\alpha} \lVert\bm d\rVert^2 \\
\text{s.t.} \ \ &  
\bigl\langle\nabla f(\bm s_{k,j}),\bm d\bigr\rangle-e_{k,j}\leq z, \quad j=0,1,\ldots,m.
\end{split}
\end{align}
We emphasize that problem \eqref{Main_sub} is a QP, and its objective function is strictly convex. Hence, it has a unique optimal solution, which is denoted by $(z_k,\bm d_k)$. Note that, $z_k$ and $\bm d_k$ are related to each other as follows
\begin{equation}\label{z andf-k-p}
z_k=\max\left\{\bigl\langle\nabla f(\bm s_{k,j}),\bm d_k\bigr\rangle-e_{k,j}\,\,;\,\, j=0,1,\ldots,m\right\}.
\end{equation}
Alternatively, one can consider the dual of problem \eqref{Main_sub} which is given by
\begin{align}\label{Dual-sub}
\begin{split}
\min_{\boldsymbol{\lambda}} \ \ & \frac{1}{2} \lVert \sum_{j=0}^m \lambda_j \nabla f(\bm s_{k,j})\rVert^2 +\frac{1}{\varepsilon_k^\alpha} \sum_{j=0}^m \lambda_j e_{k,j}  \\
{\rm{s.t.}} \ \ & \sum_{j=0}^m \lambda_j=1, \quad \lambda_j\geq 0, \quad j=0,\ldots,m.
\end{split}
\end{align} 
Let $\boldsymbol{\lambda}^k=(\lambda^k_0,\ldots,\lambda^k_m)\in\mathbb{R}^{m+1}$ be the optimal solution of problem \eqref{Dual-sub}. Then some simple duality arguments relate $\boldsymbol{\lambda}^k$ to $(z_k, \bm d_k)$ as follows
\begin{subequations}\label{Rep-z-and-d}
	\begin{align}
	&z_k= - \varepsilon_k^\alpha \lVert \sum_{j=0}^m \lambda^k_j \nabla f(\bm s_{k,j})\rVert^2 - \sum_{j=0}^{m}\lambda^k_j e_{k,j},\label{Rep-z}\\&
	\bm d_k=- \varepsilon_k^\alpha \sum_{j=0}^m \lambda^k_j \nabla f(\bm s_{k,j}).\label{Rep-d}
	\end{align}	
\end{subequations}
For the sake of simplicity in notations and later use, let
\begin{equation}\label{tilde-g,e-0}
\tilde{\bm g}_k:=\sum_{j=0}^m \lambda^k_j \nabla f(\bm s_{k,j}) \quad  \text{and} \quad \tilde{e}_{k}:= \sum_{j=0}^{m}\lambda^k_j e_{k,j},
\end{equation}
then \eqref{Rep-z-and-d} is rewritten by
\begin{equation}\label{Rep-d-and-z-by-aggregate}
z_k= - \varepsilon_k^\alpha \, \lVert \tilde{\bm g}_k\rVert^2 - \tilde{e}_k \quad \text{and}\quad \bm d_k=- \varepsilon_k^\alpha \, \tilde{\bm g}_k.
\end{equation}
In addition, if we denote the optimal value of subproblem \eqref{Dual-sub} by $w_k$, then
\begin{equation}\label{optimal-value-of-Dualsub}
w_k=\frac{1}{2} \lVert \tilde{\bm g}_k\rVert^2 + \frac{1}{\varepsilon^{\alpha}_k} \, \tilde{e}_k.
\end{equation}
In the light of relations \eqref{Rep-z-and-d} (or equivalently \eqref{Rep-d-and-z-by-aggregate}), the optimal solution of dual problem \eqref{Dual-sub} yields the optimal pair of primal problem \eqref{Main_sub},  $(z_k,\bm d_k)$. Furthermore, taking a glance at \eqref{Rep-d} tells us that the optimal solution $\boldsymbol{\lambda}_k$ reveals the most active gradients in our polyhedral model. More precisely, let $\lambda^k_{(j)}$ be the $j$-th largest element of $\boldsymbol{\lambda}^k$, i.e.,
$$\lambda^k_{(1)}\geq \lambda^k_{(2)}\geq\ldots\geq \lambda^k_{(m)}\geq\lambda^k_{(m+1)}\geq 0,$$
then $\nabla f(\bm s_{k,(1)})$ has the largest weight in equation \eqref{Rep-d}, and hence, it plays the most important role in our polyhedral model. Similarly, $\nabla f(\bm s_{k,(m+1)})$ has the smallest weight in this equation, which means that this gradient information is of least importance in the polyhedral model. We shall use this key fact to control the size of subproblems in the next subsection. Based on the above discussion, it is beneficial to work with the dual problem \eqref{Dual-sub}.

To derive a variable that measures the stationarity of the current point $\bm x_k$, multiply each of \eqref{e-nabla-inequality} by $\lambda^k_j$ and sum up, to obtain the following subgradient inequality (also use \eqref{tilde-g,e-0})
\begin{equation}\label{Sub-inequality-for-tilde-g}
f(\bm x)\geq f(\bm x_k)+ \bigl\langle \tilde{\bm g}_k, \bm x-\bm x_k\bigr\rangle -\tilde{e}_k, \qquad \text{for all} \,\, \bm x\in\mathbb{R}^n,
\end{equation} 
which means that
\begin{equation}\label{e-nabla-relation-in sum}
\tilde{\bm g}_k\in\partial_{\tilde{e}_k} f(\bm x_k).
\end{equation}
Thus, if we define 
\begin{equation}\label{Opt-Tol}
v_k:=\frac{1}{2}\lVert \tilde{\bm g}_k\rVert^2 + \tilde{e}_k, 
\end{equation}
then $v_k$ measures the stationarity of the current point $\bm x_k$, because $\frac{1}{2}\lVert \tilde{\bm g}_k\rVert^2$ indicates how much $\tilde{\bm g}_k$ differs from the null vector, and $\tilde{e}_k$ measures the distance from $\tilde{\bm g}_k$ to $\partial f(\bm x_k)$. In particular, $v_k=0$ implies $\bm 0\in\partial f(\bm x_k)$, which means that $\bm x_k$ is optimal for problem \eqref{MAIN-PROBLEM}. In this regard, we shall use the value of this variable as a stopping criterion.

Now assume that $(z_k, \bm d_k)$ is in hand by solving subproblem \eqref{Dual-sub} and using relations \eqref{tilde-g,e-0} and \eqref{Rep-d-and-z-by-aggregate}. Let $\beta\in(0,1)$. Then one can check the efficiency of the search direction $\bm d_k$ by the following condition, which is called \textit{sufficient decrease condition} \cite{Aggregate-sub,kiwielbook},
\begin{equation}\label{Suff-decrease}
f(\bm x_k+\bm d_k)-f(\bm x_k)\leq \beta z_k.
\end{equation}
In the case that $\bm d_k$ does not satisfy the sufficient decrease condition \eqref{Suff-decrease}, we conclude that the polyhedral model \eqref{poly-model} is not an adequate approximation to the objective function $f$ and we need to improve the quality of this model, which is the subject of the next subsection.

For the case $\bm d_k$ satisfies condition \eqref{Suff-decrease}, it is reasonable to update $\bm x_k$ by $\bm x_{k+1}:=\bm x_k+\bm d_k$, and then, repeating the process of subsection \ref{subsec-3.1} for the new trial point $\bm x_{k+1}$. However, the point $\bm x_k+\bm d_k$ must also be a differentiable point (we recall that at the new iteration $k+1$ we have $\bm s_{k+1,0}=\bm x_{k+1}$). 
Therefore, following \cite{Burke2005}, instead of updating $\bm x_k$ by $\bm x_{k+1}:=\bm x_k+\bm d_k$, we set $\bm x_{k+1}:=\texttt{FDP1}(\mathbf x_k, \mathbf d_k, \beta, z_k,\sigma)$, in which  function $\texttt{FDP1}$ has been described in Algorithm \ref{Algorithm 1(FDP)}. 

\LinesNumbered
\IncMargin{1em}
\begin{algorithm}[]
	\SetAlgoNoLine
	\SetKwInOut{Input}{inputs}
	\SetKwInOut{Output}{output}
	\SetKwInOut{Required}{required}
	\SetKwFor{Loop}{Loop}{}{End\,Loop}
	
	\Indm  
	\Input{  $\bm x, \bm d\in\mathbb{R}^n$, line search parameters $\beta, z$, and the maximum allowable perturbation $\sigma>0$.  }
	\Output{$\hat{\bm x}+\bm d$ as a differentiable perturbation of $\bm x+\bm d$.  }
	\Required{$\bm x, \bm d, \beta$ and $z$ satisfy $f(\bm x+\bm d)-f(\bm x)\leq \beta z$.}
	\Indp
	\BlankLine
	
	\SetKwFunction{FMain}{FDP1}
	\SetKwProg{Fn}{Function}{:}{}
	\Fn{\FMain{$\mathbf x$, $\mathbf d$, $\beta$, $z$, $\sigma$}}{
		$\hat{\mathbf x}:=\mathbf x$, 
		$\hat{\sigma}:=\sigma$ \;
		\While{$\hat{\mathbf x}+\mathbf d\in D\  \textbf{\rm and}\  f(\bm {\hat{x}}+\bm d)-f(\bm x)\leq \beta z $}{
			Sample $\bm{\hat{x}}$ independently and uniformly from $B(\bm x,\hat\sigma)$ \;
			$\hat\sigma:=\frac{\hat\sigma}2$ \;
		}
		\textbf{return} $\hat{\bm x}+\bm d$ \; 
	}
	\textbf{End Function}
	\caption{\rule{0pt}{2ex} Finding  Differentiable Perturbation 1  (FDP1)}
	\label{Algorithm 1(FDP)}
\end{algorithm}
\DecMargin{1em} 
Algorithm \ref{Algorithm 1(FDP)} is similar to the one proposed in \cite{Burke2005}. However, the only difference is that we have added  the input parameter $\sigma$ to specify the maximum allowable perturbation, so that 
the returned point by this function lies in the $\sigma$-neighborhood of $\bm x_k+\bm d_k$. In the following, we give a quick explanation of how this algorithm works. In the case that $\bm x_k+\bm d_k\in D$, the algorithm leaves the point $\bm x_k+\bm d_k$ unchanged. For the case $\bm x_k+\bm d_k\notin D$, this algorithm starts an iterative process to find a point $\hat{\bm x}_k$ around $\bm x_k$ so that the resulting point $\hat{\bm x}_k+ \bm d_k$ lies in $D$ and also satisfies sufficient decrease condition \eqref{Suff-decrease}. Due to the continuity of $f$, this iterative process terminates after finitely many iterations, with probability 1 \cite{Burke2005,Kiwiel2007}. Updating $\bm x_k$ by  $\bm x_{k+1}:=\texttt{FDP1}(\mathbf x_k, \mathbf d_k, \beta, z_k,\sigma)$  ensures  that  

\begin{align*}
\lVert {\mathbf x_{k+1}}-(\bm x_k+\bm d_k)\rVert&=  \lVert \texttt{FDP1}(\mathbf x_k, \mathbf d_k, \beta, z_k,\sigma)-(\bm x_k+\bm d_k)\rVert
\\& =\lVert (\hat{\bm x}_k+\bm d_k)-(\bm x_k+\bm d_k)\rVert =\lVert \hat{\bm x}_k-\bm x_k\rVert\\&<\sigma.
\end{align*}
Therefore, the distance between $\bm x_k+\bm d_k$ and its differentiable perturbation, $\bm x_{k+1}$, can be controlled through the parameter $\sigma$.

According to \cite{Burke2005}, the case $\bm x_k+\bm d_k\notin D$ is unlikely to occur, and in practice we do not check this possibility. However, since it is not a zero probability event \cite{Diff-Check}, Algorithm \ref{Algorithm 1(FDP)} plays its role as a theoretical trick to ensure the convergence of the method with probability 1.

\subsection{Improvement process}\label{subsec-3.3}
In this subsection, we assume that the search direction $\bm d_k$, obtained by solving subproblem \eqref{Dual-sub}, does not satisfy sufficient decrease condition \eqref{Suff-decrease}. In such a situation, one can infer that our current polyhedral model is not an adequate approximation to the objective function $f$, and therefore, the quality of this model has to be improved. For this purpose, we carry out an iterative process, namely the improvement process. At each iteration of this procedure, we use the current inefficient search direction to obtain a new auxiliary point, at which we can define a new linearization. In the case that the error of this new linearization is small, we append it to the previous polyhedral model, and consequently, the model is enriched. Next, some strategies are used to control the size of subproblems. On the other hand, if the error of the new linearization exceeds some tolerance, instead of enriching the model, we reduce the ray of the sampling region and we reconstruct the polyhedral model \eqref{poly-model}. In this case, the improvement process is terminated and we accept a \emph {null step}. Then  the process of subsection \ref{subsec-3.1}, with a smaller sampling radius, is repeated.

Here, we explain how this improvement process works. Throughout this process, the subscript $k$ is kept fixed, and the running index is subscript $i$. At first, we fully describe the iteration $i=0$, because it provides the basis for updating variables recursively. Next, we present the general form of the subproblems we need to solve at each iteration of this process and some recursive rules to update variables. To initialize this process, set
\begin{align*}\label{Initialization}
\boldsymbol{\lambda}^{k,0}\hspace{-2pt}:=\boldsymbol{\lambda}^k,\, \tilde{\bm g}_{k,0}\hspace{-2pt}:=\tilde{\bm g}_k, \, \tilde{e}_{k,0}\hspace{-2pt}:=\tilde{e}_{k}, \bm d_{k,0}\hspace{-2pt}:=\bm d_k,\, z_{k,0}:=z_k,\,  w_{k,0}:=w_k, \, v_{k,0}:=v_k. 
\end{align*}
We know that  $\bm d_{k,0}$ does not satisfy  sufficient decrease condition \eqref{Suff-decrease}, i.e.,
\begin{equation}\label{insuff-decrease}
f(\bm x_k+\bm d_{k,0})-f(\bm x_k) > \beta z_{k,0}. 
\end{equation}
Before proceeding any further, similar to the previous section, we may need a differentiable perturbation of the point $\bm x_k+\bm d_{k,0}$. For this purpose, we use \texttt{FDP2} function which is defined similarly to Algorithm \ref{Algorithm 1(FDP)}, and the only difference is the following modifications 
\begin{align*}
&`` \bm {required}:\bm x, \bm d, \beta \,\, \text{and}\,\, z\,\, \text{satisfy}\,\, f(\bm x+\bm d)-f(\bm x)> \beta z \text{''}\\ & 
`` \bm{Line\, 9}:  \bm{while}  \ \   \bm{\hat{x}}+\bm d\in D \ \ {\rm{and}} \ \   f(\bm {\hat{x}}+\bm d)-f(\bm x)> \beta z \,\,\, \bm{do}\text{''} 
\end{align*}
Now, we define the new auxiliary point $\bm s_{k,m+1}$ as
$$\bm s_{k,m+1}:=\texttt{FDP2}(\bm x_k, \bm d_{k,0}, \beta, z_{k,0}, \sigma), $$
and then, we consider its corresponding  linearization
\begin{equation}\label{New-linearization}
f_{k,m+1}(\bm x):= f(\bm s_{k,m+1})+\bigl\langle \nabla f(\bm s_{k,m+1}), \bm x-\bm s_{k,m+1}\bigr\rangle,
\end{equation} 
with the following linearization error 
\begin{equation}\label{error-of-new-linearization}
e_{k,m+1}:= f(\bm x_k)-f_{k,m+1}(\bm x_k).
\end{equation}
Now, we need to find out whether the new linearization \eqref{New-linearization} can effectively improve the quality of the model or not. For this purpose, we use the following criterion, which initially proposed in \cite{BT-method}
\begin{equation}\label{Criterion}
e_{k,m+1}\leq \gamma \tilde{e}_{k,0} \quad \text{or} \quad \lvert f(\bm x_k+\bm d_{k,0})-f(\bm x_k)\rvert \leq\frac{1}{2} \lVert \tilde{\bm g}_{k,0}\lVert^2+\tilde{e}_{k,0},
\end{equation}
in which $\gamma$ is a user-defined parameter from $(0,1)$. Notice that the first part of \eqref{Criterion} controls the error of the new linearization, while its second part plays a technical role to guarantee the convergence of the method.
If criterion \eqref{Criterion} holds, then we enrich our polyhedral model by appending the new linearization $f^k_{m+1}$ to the previous model. Otherwise, adding the new linearization \eqref{New-linearization} does not make a significant improvement to the model. 
In this case, terminating the improvement process and reconstructing the polyhedral model \eqref{poly-model} using a smaller sampling radius would be a better strategy. In this respect, for the reduction factor $\mu\in(0,1) $,  we set
$$\varepsilon_{k+1}:=\mu\varepsilon_k \quad \text{and} \quad \bm x_{k+1}:=\bm x_k,  $$
and the process of subsection \ref{subsec-3.1} is repeated.

Now, we continue our discussion by assuming that \eqref{Criterion} holds. In this case, the new linearization \eqref{New-linearization} is added to polyhedral model \eqref{poly-model}. In mathematical terms, the following constraint is appended to the previous primal subproblem~\eqref{Main_sub}
\begin{equation}\label{New-constraint}
\bigl\langle\nabla f(\bm s_{k,m+1}),\bm d\bigr\rangle-e_{k,m+1}\leq z,
\end{equation}
and, as a result, subproblem \eqref{Main_sub}  is enriched as follows 
\begin{subequations}\label{enriched-primal-subproblem}
	\begin{align}
	\min_{(z,\bm d)} \ \ & z+\frac{1}{2}\varepsilon_k^{-\alpha} \lVert\bm d\rVert^2\nonumber\label{Aggregated-constraints} \\
	\text{s.t.} \ \ & 
	\bigl\langle\nabla f(\bm s_{k,j}),\bm d\bigr\rangle-e_{k,j}\leq z, \quad j=0,1,\ldots,m,\\&\bigl\langle\nabla f(\bm s_{k,m+1}),\bm d\bigr\rangle-e_{k,m+1}\leq z.\label{Large-subproblem} 
	\end{align} \end{subequations}
In what follows, we show that the new constraint \eqref{New-constraint} contributes in a nonredundant manner to the enriched subproblem \eqref{enriched-primal-subproblem}.\\
In view of $\bm s_{k,m+1}=\bm{\hat{x}}_{k,0}+\bm d_{k,0}=\texttt{FDP2}(\bm x_k, \bm d_{k,0}, \beta, z_{k,0}, \sigma)$, $\beta\in(0,1), z_{k,0}<0$ and \eqref{error-of-new-linearization}, one can observe that
\begin{align*}
&\bigl\langle\nabla f(\bm s_{k,m+1}),\bm d_{k,0}\bigr\rangle-e_{k,m+1}\\&
=\bigl\langle\nabla f(\bm s_{k,m+1}),\bm d_{k,0}\bigr\rangle-f(\bm x_k)+f(\bm s_{k,m+1})+ \bigl\langle\nabla f(\bm s_{k,m+1}), \bm x_k-\bm s_{k,m+1}\bigr\rangle\\&
=\bigl\langle\nabla f(\bm s_{k,m+1}),\bm d_{k,0}\bigr\rangle-f(\bm x_k)+f(\bm s_{k,m+1})+ \bigl\langle\nabla f(\bm s_{k,m+1}), \bm x_k-\hat{\bm x}_{k,0}-\bm d_{k,0}\bigr\rangle\\&
= f(\bm s_{k,m+1}) -f(\bm x_k)- \bigl\langle\nabla f(\bm s_{k,m+1}), \hat{\bm x}_{k,0}-\bm x_k\bigr\rangle \\& > \beta z_{k,0} - \lVert \nabla f(\bm s_{k,m+1})\rVert \, \lVert \hat{\bm x}_{k,0}-\bm x_k\rVert \\&>z_{k,0}-\lVert \nabla f(\bm s_{k,m+1})\rVert \lVert \hat{\bm x}_{k,0}-\bm x_k\rVert.
\end{align*}
In the last inequality,  by choosing $\sigma$ sufficiently small, we can make $ \lVert \hat{\bm x}_{k,0}-\bm x_k\rVert$ arbitrarily small, which means that
$$\bigl\langle\nabla f(\bm s_{k,m+1}),\bm d_{k,0}\bigr\rangle-e_{k,m+1}> z_{k,0}. $$
In other words, $(z_{k,0}, \bm d_{k,0})$ does not meet the new constraint \eqref{New-constraint}, and consequently, the enriched subproblem \eqref{enriched-primal-subproblem}  will generate a search direction that is different from the inefficient current direction $\bm d_{k,0}$.

Although we can now solve the dual of subproblem \eqref{enriched-primal-subproblem} to obtain a new search direction, it is wise to use the Lagrangian vector $\boldsymbol{\lambda}^{k,0}$ to drop some inefficient constraints of \eqref{Aggregated-constraints}. To this end, we use an \emph{adaptive gradient selection strategy}. Before applying this strategy, following \cite{Aggregate-sub,kiwielbook,Makela_book}, to keep the general effect of these constraints, we aggregate them into a single constraint. For this purpose, multiply each of \eqref{Aggregated-constraints} by $\lambda^{k,0}_j$ and sum up to obtain the following \emph{aggregated constraint} 
\begin{equation}\label{Singlr-Aggregated-Const}
\bigl\langle \tilde{\bm g}_{k,0}, \bm d\bigr\rangle-\tilde{e}_{k,0}\leq z.
\end{equation}
Next, we adaptively drop those constraints that their corresponding Lagrangian multipliers were small in the previous subproblem. To this end, choose  $\theta\in[0,1]$ and let $l_0$ be the largest integer that satisfies 
\begin{equation}\label{sum-of-weights}
\sum_{j=1}^{l_0}\lambda^{k,0}_{(j)}\leq \theta,  
\end{equation}
in which $\lambda^{k,0}_{(j)}$ is the $j$-th largest component of $\boldsymbol{\lambda}^{k,0}$. By dropping those constraints which are associated with Lagrangian multipliers $\lambda^{k,0}_{(j)},\, j>l_0$, the  primal subproblem \eqref{enriched-primal-subproblem} reduces to 
\begin{align*}\label{Semi-Final-Primal-Subproblem} 
\begin{split}
\min_{(z,\bm d)}  \ & z+\frac{1}{2}\varepsilon_k^{-\alpha} \lVert\bm d\rVert^2  \\
\text{s.t.} \ \ & \bigl\langle \tilde{\bm g}_{k,0}, \bm d\bigr\rangle-\tilde{e}_{k,0}\leq z, \qquad\qquad\qquad\qquad \ \ \quad\, (\text {aggregated constraint})  \\&
\bigl\langle\nabla f(\bm s_{k,(j)}),\bm d\bigr\rangle-e_{k,(j)}\leq z, \quad 1\leq j\leq l_0, \ \  (\text{effective constraints of \eqref{Aggregated-constraints}}) \\&\bigl\langle\nabla f(\bm s_{k,m+1}),\bm d\bigr\rangle-e_{k,m+1}\leq z. \qquad\qquad\quad\,(\text{new constraint \eqref{New-constraint}})
\end{split}
\end{align*} 
We note that the user-defined parameter $\theta\in[0,1]$ controls the number of selected constraints. Indeed, a larger $\theta$ keeps more constraints of \eqref{Aggregated-constraints}. In particular, since $\sum_{j=0}^{m}\lambda^{k,0}_{j}=1$, for $\theta=1$ no constraint of \eqref{Aggregated-constraints} is removed, and by choosing $\theta=0$, all of them are discarded.

In addition, to guarantee the convergence of the method, we need to have the following constraint in the above subproblem
$$\bigl\langle\nabla f(\bm s_{k,0}),\bm d\bigr\rangle-e_{k,0}\leq z,$$
which corresponds to current point $\bm x_k=\bm s_{k,0}$. Eventually, if we set
\begin{equation}\label{J_k0}
J_{k,0}:=\left\{(j) \,\,:\,\, j=1,\ldots,l_0 \right\}\cup\{0,m+1\},  
\end{equation}
we reach the following primal search direction finding subproblem at iteration $i=0$
\begin{align}\label{Final-Primal-Subproblem} 
\begin{split}
\min_{(z,\bm d)} \ \ & z+\frac{1}{2}\varepsilon_k^{-\alpha} \lVert\bm d\rVert^2 \\
\text{s.t.} \ \ & \bigl\langle \tilde{\bm g}_{k,0}, \bm d\bigr\rangle-\tilde{e}_{k,0}\leq z,  \\&
\bigl\langle\nabla f(\bm s_{k,j}),\bm d\bigr\rangle-e_{k,j}\leq z, \quad j\in J_{k,0}.
\end{split}
\end{align} 
To complete the first iteration ($i=0$), we should solve the above problem to obtain a new search direction. However, as we mentioned earlier, we prefer to solve the dual of the above problem, to obtain the new search direction $\bm d_{k,1}$. 
If $\bm d_{k,1}$ satisfies a sufficient decrease condition, then the improvement process is completed. 
Otherwise, we need to perform more iterations of this process.

In what follows, we discuss the $i$-th iteration ($i\ge1$) of the improvement process. We note that at the beginning of this iteration, the information 
$\bm d_{k,i}, \boldsymbol{\lambda}^{k,i}, \tilde{\bm g}_{k,i}, w_{k,i}, v_{k,i},\tilde{e}_{k,i}$ and  $J_{k,i-1}$ are in hand from the $(i-1)$-th iteration.
At first, we set
$$\bm s_{k,m+i+1}:= \texttt{FDP2}(\bm x_k, \bm d_{k,i}, \beta, z_{k,i}, \sigma),$$ 
and we consider the corresponding linearization
\begin{equation}
f_{k,m+i+1}(\bm x):= f(\bm s_{k,m+i+1})+ \bigl\langle\nabla f(\bm s_{k,m+i+1}),x-\bm s_{k,m+i+1}\bigr\rangle ,
\end{equation} 
with the linearization error
\begin{equation}
e_{k,m+i+1}:=f(\bm x_k)-f_{k,m+i+1}(\bm x_k).
\end{equation}
Moreover, suppose that the following criterion holds for this new linearization
\begin{equation}\label{Criterion-i}
e_{k,m+i+1}\leq \gamma \tilde{e}_{k,i} \quad \text{or} \quad \lvert f(\bm x_k+\bm d_{k,i})- f(\bm x_k)\rvert \leq \frac{1}{2}\lVert \tilde{\bm g}_{k,i}\lVert^2+\tilde{e}_{k,i}.
\end{equation}
Now similar to the preceding  discussion, having aggregated all constraints of the previous subproblem, we discard some extra indices of $J_{k,i-1}$ to define 
\begin{equation}\label{J_ki}
J_{k,i}=\{(j)\,\, : \,\, j=1,\ldots,l_{i} \,\, \text{and} \,\, (j)\in J_{k,i-1}\}\cup \{0,m+i+1\},
\end{equation}
in which $l_{i}$ is the largest integer that satisfies
\[
\frac{\sum_{j=1}^{l_{i}}\lambda^{k,i}_{(j)}}{\sum_{j\in J_{k,i-1}\lambda^{k,i}_j}}\leq \theta, \quad \text{for some} \,\, \theta\in[0,1].
\]

After computing $J_{k,i}$, we are in a position to consider the following primal search direction finding subproblem at iteration $i\geq 0$
\begin{align}\label{FFinal-Primal-Subproblem}
\begin{split}
\min_{(z,\bm d)} \ \ & z+\frac{1}{2}\varepsilon_k^{-\alpha} \lVert\bm d\rVert^2 \\ \text{s.t.}\ \ & 
\bigl\langle \tilde{\bm g}_{k,i} ,\bm d\bigr\rangle-\tilde e_{k,i}\leq z,\\& \bigl\langle\nabla f(\bm s_{k,j}),\bm d\bigr\rangle-e_{k,j}\leq z, \quad j\in J_{k,i}.
\end{split}
\end{align}
Note that, if $(z_{k,i+1},\bm d_{k,i+1})$ solves subproblem \eqref{FFinal-Primal-Subproblem}, then 
\begin{equation}\label{z-k-i f-tilde-k-p}
z_{k,i+1}=\max\left\{\bigl\langle \tilde{\bm g}_{k,i} ,\bm d_{k,i+1}\bigr\rangle-\tilde e_{k,i},\, \bigl\langle\nabla f(\bm s_{k,j}),\bm d_{k,i+1}\bigr\rangle-e_{k,j}\,;\, j\in J_{k,i}  \right\}.
\end{equation}
Finally, we consider the dual of subproblem \eqref{FFinal-Primal-Subproblem} which is given by
\begin{align}\label{Dual-sub-in process}
&\min_{\boldsymbol{\lambda}} \, \frac{1}{2} \lVert \sum_{j\in J_{k,i}} \lambda_j \nabla f(\bm s_{k,j})+\tilde{\lambda} \,\tilde{\bm g}_{k,i}\rVert^2 +\frac{1}{\varepsilon_k^\alpha} (\sum_{j\in J_{k,i}} \lambda_j e_{k,j}+\tilde{\lambda} \,\tilde{e}_{k,i}) \nonumber \\  & {\rm{s.t.}} \quad \sum_{j\in J_{k,i}}\lambda_j+\tilde{\lambda} =1,\quad \tilde{\lambda}\geq 0, \quad \lambda_j\geq 0, \quad j\in J_{k,i}.
\end{align} 
Suppose that $\boldsymbol{\lambda}^{k,i+1}=\left(\lambda^{k,i+1}_j;j\in J_{k,i}, \tilde{\lambda}^{k,i+1}\right)$ solves subproblem \eqref{Dual-sub-in process}. Then, the following updates are performed
\begin{align}
&\tilde{\bm g}_{k,i+1}:=\sum_{j\in J_{k,i}} \lambda^{k,i+1}_j \nabla f(\bm s_{k,j})+\tilde{\lambda}^{k,i+1}\, \tilde{\bm g}_{k,i},\label{Update-p1}\\&
\tilde{e}_{k,i+1}:=\sum_{j\in J_{k,i}} \lambda^{k,i+1}_j e_{k,j}+\tilde{\lambda}^{k,i+1}\,\tilde{e}_{k,i},\label{Update-p2}
\\
&z_{k,i+1}:=-\varepsilon_k^\alpha \lVert \, \tilde{\bm g}_{k,i+1}\rVert^2- \tilde{e}_{k,i+1},\label{Update-p3a} \\
&\bm d_{k,i+1}:=-\varepsilon_k^\alpha \, \tilde{\bm g}_{k,i+1},\label{Update-p3b}\\
& v_{k,i+1}:= \frac{1}{2}\lVert \tilde{\bm g}_{k,i+1}\rVert^2 + \tilde{e}_{k,i+1}, \label{Update-p4}\\
&w_{k,i+1}:= \frac{1}{2} \lVert \tilde{\bm g}_{k,i+1}\rVert^2+\frac{1}{\varepsilon^{\alpha}_k} \, \tilde{e}_{k,i+1}. \label{optimal-value-of-Dualsub-in-process} 
\end{align} 
In the case that $\bm d_{k,i+1}$ satisfies the following sufficient decrease condition
\begin{equation}\label{Suf-condition-in process}
f(\bm x_k+ \bm d_{k,i+1})-f(\bm x_k)\leq \beta z_{k,i+i} \,,
\end{equation}
we set
$$\bm x_{k+1}:= \texttt{FDP1}(\bm x_k, \bm d_{k,i+1}, \beta, z_{k,i+1},\sigma), $$
and we terminate the improvement process to repeat the procedure of subsection \ref{subsec-3.1} for the new trial point $\bm x_{k+1}$. Otherwise, we increase $i$ by one and the improvement process is repeated. 

In what follows, we show that the variable $v_{k,i}$ still measures the stationarity of the current point $\bm x_k$. For this purpose, we recall from \eqref{Sub-inequality-for-tilde-g} that
\begin{equation}\label{c1}
f(\bm x)\geq f(\bm x_k)+ \bigl\langle \tilde{\bm g}_{k,0}, \bm x-\bm x_k\bigr\rangle -\tilde{e}_{k,0}, \qquad \text{for all} \,\, \bm x\in\mathbb{R}^n.
\end{equation} 
Furthermore, similar to \eqref{e-nabla-inequality}, we know that  
\begin{equation}\label{c2}
f(\bm x)\geq f(\bm x_k)+ \bigl\langle \nabla f(\bm s_{k,j}), \bm x-\bm x_k\bigr\rangle -e_{k,j}, \  \text{for all} \,\, \bm x\in\mathbb{R}^n \,\, \text{and} \,\, j\in J_{k,0}.
\end{equation}
Now, by multiplying the  inequality \eqref{c1}  by $\tilde{\lambda}^{k,1}$ and each of inequalities \eqref{c2} by $\lambda^{k,1}_j$ and summing up, we obtain (use also \eqref{Update-p1} and \eqref{Update-p2})
\begin{equation*}
f(\bm x)\geq f(\bm x_k)+ \bigl\langle \tilde{\bm g}_{k,1}, \bm x-\bm x_k\bigr\rangle -\tilde{e}_{k,1}, \quad {\rm for \,\, all} \,\, \bm x\in\mathbb{R}^n,
\end{equation*}
which means that $\tilde{\bm g}_{k,1}\in \partial_{\tilde{e}_{k,1}}f(\bm x_k)$. By repeating this process inductively, we observe that
\begin{equation}\label{Sub-inequality-in-process}
f(\bm x)\geq f(\bm x_k)+ \bigl\langle \tilde{\bm g}_{k,i}, \bm x-\bm x_k\bigr\rangle -\tilde{e}_{k,i}, \quad {\rm for \,\, all} \,\, \bm x\in\mathbb{R}^n  \,\, \text{and}\,\, i,
\end{equation}
and hence
\begin{equation}\label{tilde-e-g-in-process}
\tilde{\bm g}_{k,i}\in \partial_{\tilde{e}_{k,i}}f(\bm x_k), \quad \text{for\,all}\,\, i.
\end{equation}
Thus, the variable 
$$v_{k,i}=\frac{1}{2}\lVert \tilde{\bm g}_{k,i}\rVert^2 + \tilde{e}_{k,i},$$ 
measures the stationarity of the current point $\bm x_k$. In this respect, if the value of this variable is less than some optimality tolerance, we terminate the whole algorithm.  

\subsection{Bundle-GS algorithm}\label{Bundle-GS Algorithm}
Here, based on the previous discussions, a version of bundle methods,
namely the Bundle-GS (B-GS) method is presented  in Algorithm \ref{B-GS-Alg}.
In this algorithm, the outer loop uses the GS technique to build up an initial model for the objective function $f$, while the inner loop checks the efficiency of this search direction and, if necessary, it runs the improvement process.
Also, the functions \texttt{FDP1} and \texttt{FDP2} play their roles to keep the new iterations and auxiliary points in $D$. The B-GS algorithm is terminated whenever the value of $v_{k,i}$ is less than some optimality tolerance.

\LinesNumbered

\IncMargin{1em}
\begin{algorithm}[]
	\SetAlgoNoLine
	\SetKwInOut{Input}{inputs}
	\SetKwInOut{Output}{output}
	\SetKwInOut{Required}{requied}
	\SetKwFor{Loop}{Loop}{}{End\,Loop}
	
	\Indm  
	\Input{ \rule{0pt}{4ex}  $\bm x_0\in D$ as starting point, initial sampling radius $\varepsilon_0>0$ together with its reduction factor $\mu\in(0,1)$, sample size $m\in\mathbb{N}_0$, sufficient decrease parameter $\beta\in(0,1)$, maximum allowable perturbation $\sigma>0$, scale parameters $\alpha,\gamma>0$, maximum weight $\theta
		\in[0,1]$, and termination tolerance $\epsilon_{tol}>0$. }
	
	\Indp
	\BlankLine
	
	Set $k:=0$;
	
	\Loop{}{
		Sample $\mathbf s_{k,1},\ldots,\mathbf s_{k,m}$  independently and uniformly from $B(\bm x_k,\varepsilon_k)$ and set $\bm s_{k,0}:=\bm x_k$\;
		\If{for some $j=1,\ldots,m$ the point $\bm s_{k,j}\notin D$}{\textbf{STOP}!\;}
		Solve subproblem \eqref{Dual-sub} to compute $\boldsymbol{\lambda}^{k,0}$\;
		Compute $ \tilde{\bm g}_{k,0}, \tilde{e}_{k,0}\, z_{k,0}, \bm d_{k,0},  \, \text{and}\,\, v_{k,0}$  by \eqref{tilde-g,e-0}, \eqref{Rep-d-and-z-by-aggregate},  and \eqref{Opt-Tol} respectively\;
		Form $J_{k,0}$ as described in \eqref{J_k0}\;
		Set  $i:=0$\;
		\Loop{}{
			
			\If{$v_{k,i}\leq\epsilon_{\rm tol}$}
			{
				Return ${\mathbf x}_k$ as an approximation of the minimum point and \textbf{STOP}\;
			}
			\eIf{$f(\bm x_k+\bm d_{k,i})-f(\bm x_k)\leq \beta\, z_{k,i}$ }{
				Set $\bm x_{k+1}:=\texttt{FDP1}(\bm x_k,\bm d_{k,i},\beta,z_{k,i},\sigma)$\, and\, $\varepsilon_{k+1}:=\varepsilon_k$\;
				\textbf{break}\;
			}
			{
				Set $\bm s_{k,m+i+1}:=\texttt{FDP2}(\bm x_k, \bm d_{k,i},\beta, z_{k,i},\sigma)$\;
				\eIf{criterion \eqref{Criterion-i} holds}{
					Solve subproblem \eqref{Dual-sub-in process} to compute $\boldsymbol{\lambda}^{k,i+1}$\;
					Compute $\tilde{\bm g}_{k,i+1}, \tilde{e}_{k,i+1}$, $z_{k,i+1}$, $\bm d_{k,i+1}$, and $v_{k,i+1}$  by \eqref{Update-p1}-\eqref{Update-p4}, respectively\; 	
					Form $J_{k,i+1}$ as described in \eqref{J_ki}\;
				}
				{
					Set $\varepsilon_{k+1}:=\mu \varepsilon_k$ and  $\bm x_{k+1}:=\bm x_k$\;
					\textbf{break}\;
				}
			}
			Set $i:=i+1$\;
		}
		Set $k=:k+1$\;
	}

	\caption{ \rule{0pt}{2.5ex}Bundle-GS (B-GS) Algorithm }\label{B-GS-Alg}
	\label{BGS}
\end{algorithm}
\DecMargin{1em}

\section{Convergence analysis}\label{Convergence analysis}
In this section, we study the convergence of the presented B-GS algorithm. In fact, we shall show that, whenever $f$ attains its minimum, each sequence $\{x_k\}$ generated by Algorithm \ref{B-GS-Alg} converges to a minimum point. Our convergence analysis closely follows the works of Kiwiel in \cite{Aggregate-sub,kiwielbook} and Schramm and Zowe in \cite{BT-method}. Throughout this subsection, we assume that the optimality tolerance $\epsilon_{\text{tol}}$ is set to zero. This allows Algorithm \ref{B-GS-Alg} to generate an infinite sequence of iterations, and consequently, we can provide some asymptotic convergence results.
We start with the following theorem which studies the convergence of the method in the case of finite number of iterations. 
\begin{theorem}
	Suppose that Algorithm \ref{B-GS-Alg} terminates at the $k$-th iteration. Then, with probability 1, $\bm x_k$ is a minimum point for the objective function $f$.	
\end{theorem}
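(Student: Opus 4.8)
The plan is to analyze the two mechanisms by which Algorithm~\ref{B-GS-Alg} can halt and to show that, with probability $1$, only the one signalling optimality is ever in force. Inspecting the pseudocode, a \textbf{STOP} is triggered either in the outer loop, when some sampled point $\bm s_{k,j}\notin D$, or in the inner loop, when the stationarity measure satisfies $v_{k,i}\leq\epsilon_{\text{tol}}$. The first of these is a degenerate failure carrying no optimality information; however, as recalled in Section~\ref{Preliminaries}, Rademacher's theorem guarantees that the convex function $f$ is differentiable almost everywhere, so that points drawn independently and uniformly from the positive-measure ball $B(\bm x_k,\varepsilon_k)$ lie in $D$ with probability $1$. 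Consequently, with probability $1$ the test $\bm s_{k,j}\notin D$ never fires, and termination at the $k$-th iteration must occur through the inner-loop optimality test.

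First I would record that, under the standing assumption of this section, $\epsilon_{\text{tol}}=0$; thus termination means $v_{k,i}\leq 0$ for some inner index $i$. Recalling from \eqref{Opt-Tol} and \eqref{Update-p4} that $v_{k,i}=\frac{1}{2}\lVert\tilde{\bm g}_{k,i}\rVert^2+\tilde{e}_{k,i}$, and that convexity of $f$ forces every linearization error, hence by the nonnegative convex combinations in \eqref{Update-p2} every aggregate $\tilde{e}_{k,i}$, to be nonnegative, both summands of $v_{k,i}$ are nonnegative. Therefore $v_{k,i}\leq 0$ is possible only when each summand vanishes, giving simultaneously $\tilde{\bm g}_{k,i}=\bm 0$ and $\tilde{e}_{k,i}=0$.

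Next I would combine these two facts with the subgradient containment \eqref{tilde-e-g-in-process}, namely $\tilde{\bm g}_{k,i}\in\partial_{\tilde{e}_{k,i}}f(\bm x_k)$. Since $\tilde{e}_{k,i}=0$ and, as noted in Section~\ref{Preliminaries}, $\partial_{0}f(\bm x_k)=\partial f(\bm x_k)$, this reads $\bm 0=\tilde{\bm g}_{k,i}\in\partial f(\bm x_k)$. Invoking the optimality characterization for convex functions recalled at the end of Section~\ref{Preliminaries}---that $\bm x$ minimizes $f$ if and only if $\bm 0\in\partial f(\bm x)$---I would conclude that $\bm x_k$ is a minimum point of $f$.

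The remaining, and only genuinely delicate, point is the probability-$1$ bookkeeping: one must confirm that discarding the non-differentiable-sampling event does not silently alter the conclusion. This is immediate because finite termination at iteration $k$ involves only finitely many sampling draws, so the union of the associated measure-zero exceptional sets is again measure zero, and outside this null event termination is purely through the optimality test analyzed above. I expect no real obstacle beyond stating this carefully; all the remaining steps are direct substitutions into the definitions \eqref{Opt-Tol}, \eqref{Update-p4} and the already-established inclusion \eqref{tilde-e-g-in-process}.
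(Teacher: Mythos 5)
Your proposal is correct and follows essentially the same route as the paper: rule out the non-differentiable-sampling termination (and the associated sampling inside the perturbation routines) as a probability-zero event, then observe that $v_{k,i}=\frac{1}{2}\lVert\tilde{\bm g}_{k,i}\rVert^2+\tilde{e}_{k,i}=0$ forces $\tilde{\bm g}_{k,i}=\bm 0$ and $\tilde{e}_{k,i}=0$, so that the inclusion \eqref{tilde-e-g-in-process} gives $\bm 0\in\partial f(\bm x_k)$. Your version is in fact slightly more explicit than the paper's on the nonnegativity of the aggregate errors and on the measure-zero bookkeeping, but there is no substantive difference.
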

\begin{proof}
	From measure theory, we know that the termination in Line $5$ has a zero probability. In addition, the functions \texttt{FDP1} and \texttt{FDP2} are well defined, with probability 1. Therefore, if the algorithm terminates at the $k$-th iteration, it did so in Line $13$, with probability 1. Thus, we restrict our attention to the case that there exists $i_k\in\mathbb{N}_0$ such that
	$$v_{k,i_k}=\frac{1}{2}\lVert \tilde{\bm g}_{k,i_k}\rVert^2+\tilde{e}_{k,i_k} =0.$$
	This gives that $\tilde{\bm g}_{k,i_k}=\bm 0$ and $\tilde{e}_{k,i_k}=0$. Now a glance at \eqref{tilde-e-g-in-process} reveals that  $\bm 0\in\partial f(\bm x_k)$.   \qed
\end{proof}

From now on, we suppose that the algorithm does not terminate by the stopping criterion in Line $12$. In other words, we suppose $v_{k,i}>0$ for every $k$ and $i$. Then, with probability 1, two cases may occur:\\

\emph{Case I.} Algorithm \ref{B-GS-Alg} generates an infinite sequence $\{\bm x_k\}_{k\in\mathbb{N}_0}$. In this case, for each $k\in\mathbb{N}_0$ the inner loop of this algorithm terminates after finitely many iterations.\\
\emph{Case II.} For a $\bar{k}\in\mathbb{N}_0$ the inner loop of Algorithm \ref{B-GS-Alg} does not terminate. Indeed, at the $\bar{k}$-th iteration of the algorithm, we have $i\to\infty$.\\\\
The convergence of the method is separately studied for each case. First, we consider \emph{Case I}, in which Algorithm \ref{B-GS-Alg} produces an infinite sequence $\{\bm x_k\}_{k\in\mathbb{N}_0}$.
Throughout the study of \emph{Case I}, we assume that the following assumption holds \cite{kiwielbook,Aggregate-sub,BT-method}.

\begin{assumption}\label{Assumption}
	There is a point $\bar{\bm x}\in\mathbb{R}^n$ that satisfies $f(\bar{\bm x})\leq f(\bm x_k)$ for every $k\in\mathbb{N}_0$.
\end{assumption}

First of all, we need to show that the sequence $\{\bm x_k\}_{k\in\mathbb{N}_0}$ is a convergent sequence. To this end, we start with the following auxiliary result.
\begin{lemma}\label{Lemma-1}
	Suppose that Algorithm \ref{B-GS-Alg} generates the infinite sequence $\{\bm x_k\}_{k\in\mathbb{N}_0}$ and Assumption \ref{Assumption} holds. Then for each $\delta>0$, there exists $n_{\delta}\in\mathbb{N}_0$ such that
	$$\lVert \bar{\bm x}-\bm x_{k+1}\rVert^2\leq\lVert \bar{\bm x}-\bm x_{k'}\rVert^2 + \delta, \quad {\rm{for}}\,\,\, k\geq k'\geq n_\delta. $$	
\end{lemma}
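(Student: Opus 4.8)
The plan is to establish a quasi-Fejér monotonicity relation with a summable error sequence and then conclude by bounding the tail of that error series. First I would split the $k$-th outer step into its two possible outcomes (which, with probability $1$, are exhaustive in Case I): either a \emph{serious step}, where, at the finite terminal inner index $i$, the iterate is $\bm x_{k+1}=\texttt{FDP1}(\bm x_k,\bm d_{k,i},\beta,z_{k,i},\sigma)$ with $\bm d_{k,i}=-\varepsilon_k^{\alpha}\tilde{\bm g}_{k,i}$ and the sufficient decrease condition holding at the returned point; or a \emph{null step}, where $\bm x_{k+1}=\bm x_k$. For a null step the asserted inequality is trivial (with zero error), so the substantive case is the serious step.

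Next I would set up the error bookkeeping. Serious steps give $f(\bm x_{k+1})-f(\bm x_k)\le\beta z_{k,i}<0$ and null steps leave $f$ unchanged, so $\{f(\bm x_k)\}$ is non-increasing; Assumption \ref{Assumption} bounds it below by $f(\bar{\bm x})$, whence it converges and $\sum_k\bigl(f(\bm x_k)-f(\bm x_{k+1})\bigr)<\infty$. Since \eqref{Update-p3a} gives $-z_{k,i}=\varepsilon_k^{\alpha}\lVert\tilde{\bm g}_{k,i}\rVert^{2}+\tilde e_{k,i}$ and the sufficient decrease condition yields $-z_{k,i}\le\beta^{-1}\bigl(f(\bm x_k)-f(\bm x_{k+1})\bigr)$, it follows that $\sum_{k\in S}\bigl(\varepsilon_k^{\alpha}\lVert\tilde{\bm g}_{k,i}\rVert^{2}+\tilde e_{k,i}\bigr)<\infty$, where $S$ indexes the serious steps.

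The heart of the proof is the expansion of $\lVert\bar{\bm x}-\bm x_{k+1}\rVert^{2}$ on a serious step. Putting $\bm x=\bar{\bm x}$ in the aggregate subgradient inequality \eqref{Sub-inequality-in-process} and using $f(\bar{\bm x})\le f(\bm x_k)$ gives $\langle\tilde{\bm g}_{k,i},\bar{\bm x}-\bm x_k\rangle\le\tilde e_{k,i}$. For the idealized update $\bm x_{k+1}=\bm x_k+\bm d_{k,i}$, expanding the square and inserting $\bm d_{k,i}=-\varepsilon_k^{\alpha}\tilde{\bm g}_{k,i}$ produces
\[
\lVert\bar{\bm x}-\bm x_{k+1}\rVert^{2}\le\lVert\bar{\bm x}-\bm x_k\rVert^{2}+2\varepsilon_k^{\alpha}\tilde e_{k,i}+\varepsilon_k^{2\alpha}\lVert\tilde{\bm g}_{k,i}\rVert^{2}.
\]
Because the sampling radius is non-increasing (hence $\varepsilon_k\le\varepsilon_0$), the error $c_k:=2\varepsilon_k^{\alpha}\tilde e_{k,i}+\varepsilon_k^{2\alpha}\lVert\tilde{\bm g}_{k,i}\rVert^{2}$ is bounded by a fixed multiple of $-z_{k,i}$, so $\sum_k c_k<\infty$ by the previous paragraph (with $c_k=0$ on null steps). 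Telescoping from $k'$ to $k$ gives $\lVert\bar{\bm x}-\bm x_{k+1}\rVert^{2}\le\lVert\bar{\bm x}-\bm x_{k'}\rVert^{2}+\sum_{j=k'}^{k}c_j$, and choosing $n_\delta$ with $\sum_{j\ge n_\delta}c_j\le\delta$ completes the argument.

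The step I expect to be the main obstacle is the \texttt{FDP1} perturbation: the true iterate is $\bm x_{k+1}=\hat{\bm x}_k+\bm d_{k,i}$ with only $\lVert\hat{\bm x}_k-\bm x_k\rVert<\sigma$ guaranteed, so the idealized expansion above acquires extra cross terms of order $\sigma\lVert\bar{\bm x}-\bm x_k\rVert$ and $\sigma^{2}$ that do not telescope on their own. To control them I would use that $\sigma$ is the user-chosen maximum allowable perturbation and that \texttt{FDP1} returns a point that can be made arbitrarily close to $\bm x_k+\bm d_{k,i}$, so these terms can be kept within the admissible slack $\delta$ (after first bounding $\lVert\bar{\bm x}-\bm x_k\rVert$, for instance from the monotone decrease of $f$). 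With this technical point settled, the descent bookkeeping and the telescoping are routine once the summability of $c_k$ has been secured.
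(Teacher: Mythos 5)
Your argument is correct and follows essentially the same route as the paper's proof: the same quasi-Fej\'er expansion of $\lVert\bar{\bm x}-\bm x_{k+1}\rVert^2$ using the aggregate subgradient inequality $\bigl\langle\tilde{\bm g}_{k,i_k},\bar{\bm x}-\bm x_k\bigr\rangle\le\tilde e_{k,i_k}$, the same summability of the error terms obtained from the sufficient decrease condition together with $\varepsilon_k\le\varepsilon_0$, and the same telescoping to choose $n_\delta$. Your explicit flagging of the \texttt{FDP1} perturbation is, if anything, more careful than the paper, which simply asserts that the cross term $2\lVert\bm x_k-\bar{\bm x}\rVert\,\lVert\hat{\bm x}_{k,i_k}-\bm x_k\rVert$ can be made negligible by taking $\sigma$ sufficiently small.
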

\begin{proof}
	Let $\mathcal{K}$ be a subset of $\mathbb{N}_0$ such that for every $k\in\mathcal{K}$, the point $\bm x_{k}$ is updated by Line $16$ of Algorithm \ref{B-GS-Alg}. In other words, for each $k\in\mathcal{K}$, there is $i_k\in\mathbb{N}_0$ such that
	$$\bm x_{k+1}=\texttt{FDP1}(\bm x_k, \bm d_{k,i_k}, \beta, z_{k,i_k}, \sigma). $$
	Consequently, by construction, for every $k\in\mathbb{N}_0\setminus\mathcal{K}$, we have $\bm x_{k+1}=\bm x_k$. For each $k\in\mathcal{K}$, we can write
	\begin{align}
	\bm x_{k+1}- \bm x_{k}&= \texttt{FDP1}(\bm x_k, \bm d_{k,i_k}, \beta, z_{k,i_k}, \sigma)- \bm x_k \nonumber \\& 
	=(\bm{\hat{x}}_{k,i_k}+\bm d_{k,i_k})-\bm x_k \label{FDP-rep} \\& = (\bm{\hat{x}}_{k,i_k}-\bm x_k) -\varepsilon^{\alpha}_k \, \tilde{\bm g}_{k,i_k} \quad ({\rm use} \eqref{Update-p3b}) . \label{xk+1-xk-rep}
	\end{align} 
	In addition, we know from \eqref{Sub-inequality-in-process} that
	$$\bigl\langle \tilde{\bm g}_{k,i_k}, \bm x-\bm x_k\bigr\rangle \leq f(\bm x)-f(\bm x_k)+\tilde{e}_{k,i_k}, \quad {\rm for \,\, all} \,\, \bm x\in\mathbb{R}^n. $$
	Since $f(\bm{\bar{x}})\leq f(\bm x_k)$ for all $k$, setting $\bm x=\bm{\bar{x}}$ in the above inequality, we obtain
	\begin{equation}\label{bar x-tilde-g-inequality}
	\ \bigl\langle \tilde{\bm g}_{k,i_k}, \bm {\bar x}-\bm x_k\bigr\rangle \leq \tilde{e}_{k,i_k}.
	\end{equation}
	Now, using \eqref{xk+1-xk-rep} and \eqref{bar x-tilde-g-inequality}, for every $k\in\mathcal{K}$, we can write
	\begin{align*}
	\lVert \bar{\bm x}-\bm x_{k+1}\rVert^2&=\lVert \bar{\bm x}-\bm x_{k}\rVert^2+ \lVert \bm x_k-\bm x_{k+1}\rVert^2-2\ \bigl\langle \bar{\bm x}-\bm x_k, \bm x_{k+1}-\bm x_k\bigr\rangle \nonumber \\&
	=\lVert \bar{\bm x}-\bm x_{k}\rVert^2+ \lVert \bm x_k-\bm x_{k+1}\rVert^2-2\,\ \bigl\langle \bar{\bm x}-\bm x_k, (\hat{\bm x}_{k,i_k}-\bm x_k) -\varepsilon^{\alpha}_k \tilde{\bm g}_{k,i_k}\bigr\rangle \nonumber \\& 
	\leq \lVert \bar{\bm x}-\bm x_{k}\rVert^2+ \lVert \bm x_k-\bm x_{k+1}\rVert^2 +2\, \varepsilon^{\alpha}_k \tilde{e}_{k,i_k}+ 2 \ \bigl\langle \bm x_k-\bar{\bm x}, \hat{\bm x}_{k,i_k}-\bm x_k\bigr\rangle \nonumber \\&
	\leq \lVert \bar{\bm x}-\bm x_{k}\rVert^2+ \lVert \bm x_k-\bm x_{k+1}\rVert^2 +2\, \varepsilon^{\alpha}_k \tilde{e}_{k,i_k}+ 2 \lVert \bm x_k-\bar{\bm x} \rVert \, \lVert \hat{\bm x}_{k,i_k}-\bm x_k \rVert. 
	\end{align*}
	Note that, in the last inequality we can make the term $\lVert \hat{\bm x}_{k,i_k}-\bm x_k \rVert$ as small as we like by choosing $\sigma>0$ sufficiently small. Thus,  the above inequality implies that   
	\begin{equation}\label{Main-1}
	\lVert \bar{\bm x}-\bm x_{k+1}\rVert^2 \leq \lVert \bar{\bm x}-\bm x_{k}\rVert^2+ \lVert \bm x_k-\bm x_{k+1}\rVert^2 +2\, \varepsilon^{\alpha}_k \tilde{e}_{k,i_k}, \quad \text{for \,all} \,\, k\in\mathcal{K}.
	\end{equation} 
	Furthermore, since $\bm x_{k+1}=\bm x_k$ for every $k\in\mathbb{N}_0\setminus\mathcal{K}$, we have
	\begin{equation}\label{Main-2}
	\lVert \bar{\bm x}-\bm x_{k+1}\rVert^2=\lVert \bar{\bm x}-\bm x_k\rVert^2, \quad \text{for \,all} \,\, k\in\mathbb{N}_0\setminus\mathcal{K}.
	\end{equation}
	Now using \eqref{Main-1} and \eqref{Main-2} inductively, for every $k,k'\in\mathbb{N}_0$ and $k\geq k'$, we obtain
	\begin{equation}\label{Main-3}
	\lVert \bar{\bm x}-\bm x_{k+1}\rVert^2\leq\lVert \bar{\bm x}-\bm x_{k'}\rVert^2+\sum_{\substack{j=k'\\j\in\mathcal{K}}}^{k}\left( \lVert \bm x_j-\bm x_{j+1}\rVert^2 +2 \, \varepsilon^{\alpha}_j \tilde{e}_{j,i_j} \right).
	\end{equation}
	Next, we consider the sum in \eqref{Main-3}. First, we note that for every $n\geq 1$, we can write
	\begin{align}
	f(\bm x_0)-f(\bm x_n)&=\left[f(\bm x_0)-f(\bm x_1)\right]+\ldots+\left[f(\bm x_{n-1})-f(\bm x_{n})\right] \nonumber \\&
	\geq -\beta \sum_{\substack{j=1\\j\in\mathcal{K}}}^{n} z_{j,i_j} = \beta \sum_{\substack{j=1\\j\in\mathcal{K}}}^{n} \left(\varepsilon^{\alpha}_j \, \lVert \tilde{\bm g}_{j,i_j} \rVert^2+\tilde{e}_{j,i_j} \right)\quad {\rm{(by \,\, \eqref{Update-p3a})   }}  \nonumber \\&
	=\beta \sum_{\substack{j=1\\j\in\mathcal{K}}}^{n} \left( \frac{1}{\varepsilon^{\alpha}_j}\, \lVert \bm d_{j,i_j}\rVert^2 +\tilde{e}_{j,i_j}  \right) \quad \rm{(by \,\, \eqref{Update-p3b})}. \label{Serirs-inequality} 
	\end{align}
	Moreover, in view of \eqref{FDP-rep}, we have
	$$\lVert \bm d_{j,i_j}\rVert=\lVert (\bm x_{j+1}-\bm x_j)+(\bm x_j-\hat{\bm x}_{j,i_j})\rVert\geq \lVert \bm x_{j+1}-\bm x_j\rVert - \lVert \bm x_j-\hat{\bm x}_{j,i_j} \rVert, $$
	and again we can make $\lVert \bm x_j-\hat{\bm x}_{j,i_j} \rVert$ as small as we like by choosing the parameter $\sigma$ sufficiently small, which means that
	$$\lVert \bm d_{j,i_j}\rVert \geq \lVert \bm x_{j+1}-\bm x_j\rVert.  $$
	Therefore, we can continue \eqref{Serirs-inequality} with
	\begin{equation}
	f(\bm x_0)-f(\bm x_n)\geq \beta \sum_{\substack{j=1\\j\in\mathcal{K}}}^{n} \left( \frac{1}{\varepsilon^{\alpha}_j} \lVert \bm x_{j+1}-\bm x_j\rVert^2 +\tilde{e}_{j,i_j} \right).
	\end{equation}
	Letting $n$ approach infinity and using the fact that $f(\bar{x})\leq f(\bm x_k)$ for every $k$, we conclude
	\begin{align}
	\infty&>f(\bm x_0)-f(\bm{\bar x})\geq \beta \sum_{\substack{j=1\\j\in\mathcal{K}}}^{\infty} \left( \frac{1}{\varepsilon^{\alpha}_j} \lVert \bm x_{j+1}-\bm x_j\rVert^2 +\tilde{e}_{j,i_j} \right) \nonumber \\ &
	\geq \frac{\beta} {\varepsilon^{\alpha}_0} \sum_{\substack{j=1\\j\in\mathcal{K}}}^{\infty} \left(\lVert \bm x_{j+1}-\bm x_j\rVert^2 + \varepsilon^{\alpha}_j\,\tilde{e}_{j,i_j} \right) \quad ({\rm by} \,\, \frac{\varepsilon^{\alpha}_j} {\varepsilon^\alpha_0}\leq1) \nonumber \\&
	\geq \frac{\beta} {2\varepsilon^{\alpha}_0} \sum_{\substack{j=1\\j\in\mathcal{K}}}^{\infty} \left(\lVert \bm x_{j+1}-\bm x_j\rVert^2 + 2\,\varepsilon^{\alpha}_j\,\tilde{e}_{j,i_j} \right).
	\end{align}
	This means that, we can make the sum in \eqref{Main-3} smaller than $\delta$ by choosing $k'$ sufficiently large. This completes the proof. \qed	
\end{proof} 
In the next Lemma, we show that the sequence $\{\bm x_k\}_{k\in\mathbb{N}_0}$ is indeed a convergent sequence.

\begin{corollary}\label{Corollary1}
	Under the assumptions of Lemma \eqref{Lemma-1}, the sequence $\{\bm x_k\}_{k\in\mathbb{N}_0}$ converges to some $\bm{\tilde{x}}\in\mathbb{R}^n$ such that
	$$f(\tilde{\bm x})\leq f(\bm x_k)\quad {\rm{for\, all \,\, }} k. $$ 
\end{corollary}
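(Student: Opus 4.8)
The plan is to exploit the quasi-Fej\'er type estimate of Lemma \ref{Lemma-1} together with the monotone decrease of the objective along the iterates. First I would record a simple but crucial observation: the conclusion of Lemma \ref{Lemma-1} was derived using only the property $f(\bar{\bm x})\le f(\bm x_k)$ for every $k$ (Assumption \ref{Assumption}) and the subgradient inequality \eqref{Sub-inequality-in-process}. Consequently, the very same estimate holds verbatim for \emph{any} point $\bm y\in\mathbb{R}^n$ satisfying $f(\bm y)\le f(\bm x_k)$ for all $k$. This is the key enabling remark, because the main move will be to re-run Lemma \ref{Lemma-1} at a cluster point of $\{\bm x_k\}$ rather than at the fixed reference point $\bar{\bm x}$.

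Next I would extract convergence of the scalar sequence $a_k:=\lVert\bm y-\bm x_k\rVert^2$ directly from that estimate. Fixing $\delta>0$ and the index $n_\delta$ supplied by Lemma \ref{Lemma-1}, the inequality $a_{k+1}\le a_{k'}+\delta$ for $k\ge k'\ge n_\delta$ gives, upon taking $\limsup_{k\to\infty}$ and then $\liminf_{k'\to\infty}$, that $\limsup_{k} a_k\le \liminf_{k'} a_{k'}+\delta$; letting $\delta\downarrow 0$ forces $\limsup_k a_k\le\liminf_k a_k$, so $a_k$ converges. In particular, taking $\bm y=\bar{\bm x}$ shows $\{\lVert\bar{\bm x}-\bm x_k\rVert\}$ is bounded, hence $\{\bm x_k\}$ is a bounded sequence.

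I would then use boundedness to select a subsequence $\bm x_{k_j}\to\tilde{\bm x}$ and identify $\tilde{\bm x}$ as the desired limit. The sequence $\{f(\bm x_k)\}$ is nonincreasing: a serious step satisfies the sufficient decrease condition \eqref{Suf-condition-in process} (so $f$ strictly drops, since $z_{k,i}<0$), whereas a null step leaves $\bm x_{k+1}=\bm x_k$. Being bounded below by $f(\bar{\bm x})$, the sequence $\{f(\bm x_k)\}$ converges to some $f^{\ast}$, and continuity of the convex function $f$ yields $f(\tilde{\bm x})=\lim_{j} f(\bm x_{k_j})=f^{\ast}\le f(\bm x_k)$ for every $k$. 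Thus $\tilde{\bm x}$ meets the hypothesis isolated in the first paragraph, and it already satisfies the asserted inequality $f(\tilde{\bm x})\le f(\bm x_k)$ for all $k$.

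Finally, applying Lemma \ref{Lemma-1} with $\bm y=\tilde{\bm x}$ together with the convergence argument of the second paragraph, the sequence $\lVert\tilde{\bm x}-\bm x_k\rVert^2$ converges to some limit; since the subsequence $\lVert\tilde{\bm x}-\bm x_{k_j}\rVert^2\to 0$, that limit must be $0$, whence the whole sequence satisfies $\bm x_k\to\tilde{\bm x}$. I expect the only delicate point to be justifying that Lemma \ref{Lemma-1} may legitimately be invoked at the cluster point $\tilde{\bm x}$ in place of $\bar{\bm x}$; this hinges on verifying that its proof never used any property of $\bar{\bm x}$ beyond $f(\bar{\bm x})\le f(\bm x_k)$, a property $\tilde{\bm x}$ inherits precisely from the monotonicity of $\{f(\bm x_k)\}$. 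Everything else reduces to the standard $\limsup/\liminf$ manipulation and the continuity of $f$.
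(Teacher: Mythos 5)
Your proposal is correct and follows essentially the same route as the paper: boundedness from Lemma \ref{Lemma-1}, extraction of an accumulation point $\tilde{\bm x}$, the inequality $f(\tilde{\bm x})\le f(\bm x_k)$ from monotonicity of $\{f(\bm x_k)\}$ and continuity of $f$, and then a re-application of Lemma \ref{Lemma-1} with $\tilde{\bm x}$ in place of $\bar{\bm x}$ to upgrade subsequential convergence to convergence of the whole sequence. The only differences are cosmetic — you make explicit the (correct) observation that Lemma \ref{Lemma-1} holds for any point $\bm y$ with $f(\bm y)\le f(\bm x_k)$ for all $k$, a point the paper uses tacitly, and you phrase the final step via convergence of $\lVert\tilde{\bm x}-\bm x_k\rVert^2$ rather than the paper's direct $\epsilon/2+\epsilon/2$ estimate.
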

\begin{proof}
	By Lemma \ref{Lemma-1} we know that the sequence $\{\bm x_k\}_{k\in\mathbb{N}_0}$ is bounded, and therefore, it has at least one accumulation point, say $\bm{\tilde{x}}$. We know, by construction, the sequence $\{f(\bm x_k)\}_{k\in\mathbb{N}_0}$ is a nonincreasing sequence, which gives 
	$$f(\tilde{\bm x})\leq f(\bm x_k)\quad {\rm{for\, all \,\, }} k. $$
	Now, let $\epsilon>0$ be arbitrary and apply Lemma \ref{Lemma-1} for $\bm{\tilde x}$ and $\delta=\epsilon/2$. Then there exists $n_{\epsilon/2}\in\mathbb{N}_0$ such that
	\begin{equation}\label{Lemma2-1}
	\lVert \tilde{\bm x}-\bm x_{k+1}\rVert^2\leq\lVert \tilde{\bm x}-\bm x_{k'}\rVert^2 + \epsilon/2, \quad {\rm{for}}\,\,\, k\geq k'\geq n_{\epsilon/2}.
	\end{equation}
	Furthermore, since $\bm{\tilde{x}}$ is an accumulation point of $\{\bm x_k\}_{k\in\mathbb{N}_0}$, there is a $\bar k\in\mathbb{N}_0$ and $\bar k\geq n_{\epsilon/2}$ such that
	\begin{equation}\label{Lemma2-2}
	\lVert \tilde{\bm x}-\bm x_{\bar k}\rVert^2\leq \epsilon/2.
	\end{equation}
	Combining \eqref{Lemma2-1} and \eqref{Lemma2-2}, we see
	$$\lVert \tilde{\bm x}-\bm x_{k+1}\rVert^2\leq \lVert \tilde{\bm x}-\bm x_{\bar k}\rVert^2+ \epsilon/2\leq \epsilon/2+\epsilon/2=\epsilon, \quad {\rm{for\, all}}\,\, k\geq \bar k.$$
	Since $\epsilon>0$ was arbitrary, this proves $\bm x_k \to \bm{\tilde{x}}$ as $k\to\infty$. \qed
\end{proof}	
It remains to show that $\tilde{\bm x}$ is indeed optimal for $f$. Before it, we need to establish the following lemma and its conclusion.
\begin{lemma}\label{Lemma2}
	Suppose that Algorithm \ref{B-GS-Alg} generates the infinite sequence $\{\bm x_k\}_{k\in\mathbb{N}_0}$ and Assumption \ref{Assumption} holds. Let
	$$\mathcal{A}:=\{(k,i)\in\mathbb{N}_0\times\mathbb{N}_0\,\,:\,\, \text{Algorithm} \ \ref{B-GS-Alg} \,\, \text{generates subscripts}\,\, (k,i)\}.$$ 
	Then the sequence $\{w_{k,i}\}_{(k,i)\in\mathcal{A}}$ is a bounded sequence.
\end{lemma}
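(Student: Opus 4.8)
The plan is to exploit the fact that $w_{k,i}$ is, for every admissible pair $(k,i)$, precisely the optimal value of the corresponding dual search-direction subproblem — namely \eqref{Dual-sub} when $i=0$ and \eqref{Dual-sub-in process} when $i\geq 1$, as one checks directly from the update rules \eqref{Update-p1}, \eqref{Update-p2} and the definitions \eqref{optimal-value-of-Dualsub}, \eqref{optimal-value-of-Dualsub-in-process}. Since each of these is a \emph{minimization} over the unit simplex, any feasible multiplier vector furnishes an upper bound on $w_{k,i}$. Boundedness from below is immediate: because the linearization errors satisfy $e_{k,j}\geq 0$, every aggregated error obeys $\tilde e_{k,i}\geq 0$, and with $\varepsilon_k>0$ the definition $w_{k,i}=\tfrac12\lVert\tilde{\bm g}_{k,i}\rVert^2+\varepsilon_k^{-\alpha}\tilde e_{k,i}$ gives $w_{k,i}\geq 0$.

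First I would produce the uniform upper bound by selecting the trivial vertex of the simplex associated with the index $j=0$. The key structural observation is that the index $0$ is retained in every constraint set: by \eqref{J_k0} and \eqref{J_ki} we have $0\in J_{k,i}$ for all $i$, and it is present in \eqref{Dual-sub} by construction. Hence, putting $\lambda_0=1$ and all remaining multipliers (including $\tilde\lambda$) equal to zero is feasible for \eqref{Dual-sub} and \eqref{Dual-sub-in process} alike. Evaluating the objective at this point and using $\bm s_{k,0}=\bm x_k$ together with $e_{k,0}=0$ collapses the error term and leaves only
$$w_{k,i}\leq \tfrac12\bigl\lVert \nabla f(\bm s_{k,0})\bigr\rVert^2=\tfrac12\bigl\lVert \nabla f(\bm x_k)\bigr\rVert^2,$$
valid for every $(k,i)\in\mathcal A$.

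It then remains to bound $\lVert\nabla f(\bm x_k)\rVert$ uniformly in $k$. Here I would invoke Corollary \ref{Corollary1}, which asserts that the whole sequence $\{\bm x_k\}_{k\in\mathbb{N}_0}$ converges, hence lies in some compact set $C\subset\mathbb{R}^n$. Since $f$ is convex on $\mathbb{R}^n$ it is locally Lipschitz, so on a compact neighbourhood of $C$ there is a single Lipschitz constant $L$; consequently every subgradient at a point of $C$ has norm at most $L$. As each $\bm x_k\in D$, the gradient $\nabla f(\bm x_k)$ is such a subgradient (it belongs to $\partial f(\bm x_k)$), so $\lVert\nabla f(\bm x_k)\rVert\leq L$ for all $k$. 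Combining this with the displayed inequality yields $0\leq w_{k,i}\leq \tfrac12 L^2$ for all $(k,i)\in\mathcal A$, which is the claimed boundedness.

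The only subtle point — and the one I would state carefully rather than the computations — is the persistence of the index $0$ through the aggregation and adaptive selection step, since this is exactly what keeps the same feasible vertex available at every inner iteration; everything else is either the definition of an optimal value or the standard bound on subgradients of a convex function over a bounded set, so I do not anticipate a genuine obstacle.
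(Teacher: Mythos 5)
Your proof is correct and follows essentially the same route as the paper: both arguments hinge on the retention of the index $0$ in every constraint set together with $e_{k,0}=0$ to obtain $w_{k,i}\leq \tfrac12\lVert\nabla f(\bm x_k)\rVert^2$, and then finish via the convergence of $\{\bm x_k\}$ and the local boundedness of the gradient of a convex function. The only cosmetic difference is that you get the key bound by plugging the dual-feasible vertex $\lambda_0=1$ directly into the minimization \eqref{Dual-sub}/\eqref{Dual-sub-in process}, whereas the paper performs the equivalent primal computation through $z_{k,i}$ and the unconstrained minimization over $\bm d$; your shortcut is a clean restatement of the same estimate.
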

\begin{proof}
	We know by construction that $(k,0)\in\mathcal{A}$, for every $k\in\mathbb{N}_0$. Now, we have 
	\begin{align*}
	w_{k,0}&=w_k=\frac{1}{2}\lVert \tilde{\bm g}_k\rVert^2+\frac{1}{\varepsilon^\alpha_k}\tilde{e}_k\\&
	=-\frac{1}{\varepsilon^\alpha_k}(z_k+\frac{1}{2\varepsilon^\alpha_k}\lVert\bm d_k\rVert^2) \quad (\text{use}\,\, \eqref{Rep-d-and-z-by-aggregate}  ) \\&
	=-\frac{1}{\varepsilon^\alpha_k}\left( \max_{j=0,\ldots,m}\left\{\ \bigl\langle \nabla f(\bm s_{k,j}), \bm d_k\bigr\rangle-e_{k,j}\right\} +\frac{1}{2\varepsilon^\alpha_k} \lVert\bm d_k\rVert^2 \right) \quad (\text{use}\,\, \eqref{z andf-k-p}) \\&
	\leq -\frac{1}{\varepsilon^\alpha_k} \left( \bigl\langle \nabla f(\bm s_{k,0}), \bm d_k\bigr\rangle + \frac{1}{2\varepsilon^\alpha_k} \lVert\bm d_k\rVert^2  \right)\quad (\,\text{by}\,\, e_{k,0}=0)\\&
	\leq -\frac{1}{\varepsilon^\alpha_k} \min_{\bm d\in\mathbb{R}^n}\left\{ \bigl\langle \nabla f(\bm s_{k,0}), \bm d\bigr\rangle + \frac{1}{2\varepsilon^\alpha_k} \lVert\bm d\rVert^2 \right\} \\&
	= \frac{1}{2} \lVert \nabla f(\bm s_{k,0})\rVert^2 \quad \left(\,\text{use the minimizer}\,\,\bm d^*=-\varepsilon^\alpha_k\,\nabla f(\bm s_{k,0})\right)\\&
	= \frac{1}{2} \lVert \nabla f(\bm x_k)\rVert^2 \quad (\,\text{by}\,\, \bm s_{k,0}=\bm x_k ).
	\end{align*}
	Therefore
	\begin{equation}\label{A1}
	w_{k,0}\leq \frac{1}{2} \lVert \nabla f(\bm x_k)\rVert^2, \quad \text{for all}\,\, (k,0)\in\mathcal{A}.
	\end{equation}
	In a similar fashion, for every $(k,i)\in\mathcal{A}$ and $i\geq1$, we can write
	\begin{align*}
	w_{k,i}&=\frac{1}{2}\lVert \tilde{\bm g}_{k,i}\rVert^2+\frac{1}{\varepsilon^\alpha_k}\tilde{e}_{k,i}\\&
	=-\frac{1}{\varepsilon^\alpha_k}(z_{k,i}+\frac{1}{2\varepsilon^\alpha_k}\lVert\bm d_{k,i}\rVert^2) \quad (\text{use}\,\, \eqref{Update-p3a}\,\, {\rm and} \,\, \eqref{Update-p3b} ) \\&
	=-\frac{1}{\varepsilon^\alpha_k}\left( \max_{j\in J_{k,i-1}}\left\{\bigl\langle \tilde{\bm g}_{k,i} ,\bm d_{k,i}\bigr\rangle-\tilde e_{k,i},\, \bigl\langle\nabla f(\bm s_{k,j}),\bm d_{k,i}\bigr\rangle-e_{k,j} \right\}\right. \\& \left. \qquad\qquad +\frac{1}{2\varepsilon^\alpha_k} \lVert\bm d_{k,i}\rVert^2 \right) \quad (\text{use}\,\, \eqref{z-k-i f-tilde-k-p}) \\&
	\leq -\frac{1}{\varepsilon^\alpha_k} \left( \bigl\langle \nabla f(\bm s_{k,0}), \bm d_{k,i}\bigr\rangle + \frac{1}{2\varepsilon^\alpha_k} \lVert\bm d_{k,i}\rVert^2  \right)\quad (\,\text{by}\,\, 0\in J_{k,i-1} \,\, \text{and}\,\, e_{k,0}=0)\\&
	\leq -\frac{1}{\varepsilon^\alpha_k} \min_{\bm d\in\mathbb{R}^n}\left\{ \bigl\langle \nabla f(\bm s_{k,0}), \bm d\bigr\rangle + \frac{1}{2\varepsilon^\alpha_k} \lVert\bm d\rVert^2 \right\} \\&
	= \frac{1}{2} \lVert \nabla f(\bm s_{k,0})\rVert^2 = \frac{1}{2} \lVert \nabla f(\bm x_k)\rVert^2 .
	\end{align*}
	Hence
	\begin{equation}\label{A2}
	w_{k,i}\leq \frac{1}{2} \lVert \nabla f(\bm x_k)\rVert^2, \quad \text{for all}\,\, (k,i)\in\mathcal{A}\,\, \text{and}\,\, i\geq1.
	\end{equation}
	Combining \eqref{A1} and \eqref{A2}, we can write
	\begin{equation*}
	w_{k,i}\leq \frac{1}{2} \lVert \nabla f(\bm x_k)\rVert^2, \quad \text{for all}\,\, (k,i)\in\mathcal{A}.
	\end{equation*}
	By Lemma \ref{Corollary1}, we know that $\{\bm x_k\}_{k\in\mathbb{N}_0}$ is a convergent sequence. Therefore, the local boundedness of the map $\nabla f:\mathbb{R}^n\to\mathbb{R}^n$ implies the existence of $C>0$ such that
	$$\lVert \nabla f(\bm x_k)\rVert^2\leq 2C, \quad \text{for all}\,\, k\in\mathbb{N}_0,  $$
	and consequently
	$$w_{k,i}\leq C,\quad \text{for all}\,\, (k,i)\in\mathcal{A}. $$ \qed
\end{proof}

\begin{corollary}\label{Corollary2}
	Under the assumptions of Lemma \ref{Lemma2}, the sequences $\{ \bm d_{k,i} \}_{(k,i)\in\mathcal{A}}$ and $\{ \bm{\tilde{g}}_{k,i} \}_{(k,i)\in\mathcal{A}}$ are bounded.
\end{corollary}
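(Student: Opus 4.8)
The plan is to obtain both bounds directly from the uniform estimate on $w_{k,i}$ furnished by Lemma \ref{Lemma2}, exploiting the explicit form \eqref{optimal-value-of-Dualsub-in-process} of $w_{k,i}$ together with the signs of its two summands. Concretely, Lemma \ref{Lemma2} supplies a constant $C>0$ with $w_{k,i}\leq C$ for every $(k,i)\in\mathcal{A}$, and I would read off the boundedness of $\{\tilde{\bm g}_{k,i}\}$ first, and then of $\{\bm d_{k,i}\}$, from this single inequality.

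First I would record that in the representation \[ w_{k,i}=\frac{1}{2}\lVert \tilde{\bm g}_{k,i}\rVert^2+\frac{1}{\varepsilon_k^\alpha}\,\tilde{e}_{k,i} \] both terms on the right-hand side are nonnegative. Indeed $\varepsilon_k^{-\alpha}>0$, while the aggregate error $\tilde{e}_{k,i}$ is, by the update rules \eqref{tilde-g,e-0} and \eqref{Update-p2} and a straightforward induction, a convex combination of the individual linearization errors $e_{k,j}\geq 0$, whence $\tilde{e}_{k,i}\geq 0$. Dropping the nonnegative second term therefore gives $\frac{1}{2}\lVert \tilde{\bm g}_{k,i}\rVert^2\leq w_{k,i}\leq C$, so that $\lVert \tilde{\bm g}_{k,i}\rVert\leq\sqrt{2C}$ uniformly over $(k,i)\in\mathcal{A}$. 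This establishes the boundedness of $\{\tilde{\bm g}_{k,i}\}_{(k,i)\in\mathcal{A}}$.

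For the search directions I would invoke relation \eqref{Update-p3b} (which, together with \eqref{Rep-d-and-z-by-aggregate} for the case $i=0$, holds for every index), namely $\bm d_{k,i}=-\varepsilon_k^\alpha\,\tilde{\bm g}_{k,i}$, so that $\lVert \bm d_{k,i}\rVert=\varepsilon_k^\alpha\lVert \tilde{\bm g}_{k,i}\rVert$. Since across outer iterations the sampling radius is either kept fixed or multiplied by $\mu\in(0,1)$, the sequence $\{\varepsilon_k\}$ is nonincreasing and hence $\varepsilon_k\leq\varepsilon_0$, giving $\varepsilon_k^\alpha\leq\varepsilon_0^\alpha$. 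Combining this with the bound already obtained for $\tilde{\bm g}_{k,i}$ yields $\lVert \bm d_{k,i}\rVert\leq\varepsilon_0^\alpha\sqrt{2C}$ for all $(k,i)\in\mathcal{A}$, which proves the boundedness of $\{\bm d_{k,i}\}_{(k,i)\in\mathcal{A}}$.

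I do not anticipate any genuine obstacle: the corollary is a short consequence of Lemma \ref{Lemma2}. The only points meriting care are the nonnegativity of $\tilde{e}_{k,i}$, needed to isolate the $\frac{1}{2}\lVert \tilde{\bm g}_{k,i}\rVert^2$ term, and the monotonicity of $\{\varepsilon_k\}$, needed to bound $\varepsilon_k^\alpha$ uniformly; both follow immediately from the construction of Algorithm \ref{B-GS-Alg}.
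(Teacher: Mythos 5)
Your proposal is correct and follows essentially the same route as the paper: drop the nonnegative error term from the expression for $w_{k,i}$ to bound $\tilde{\bm g}_{k,i}$, then use $\bm d_{k,i}=-\varepsilon_k^\alpha\tilde{\bm g}_{k,i}$ together with $\varepsilon_k^\alpha\leq\varepsilon_0^\alpha$ to bound $\bm d_{k,i}$. You are in fact slightly more careful than the paper, which omits the explicit justification that $\tilde{e}_{k,i}\geq 0$ and writes $w_{k,i}$ without the $\varepsilon_k^{-\alpha}$ factor on the error term (an inessential slip, since the factor is positive).
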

\begin{proof}
	We know that
	$$w_{k,i}=\frac{1}{2} \lVert \bm{\tilde{g}}_{k,i}\rVert^2+\tilde{e}_{k,i}, \quad \text{for all} \,\, (k,i)\in\mathcal{A}. $$
	Thus, the boundedness of $\{w_{k,i}\}_{(k,i)\in\mathcal{A}}	$ yields the boundedness of the sequence $\{ \bm{\tilde{g}}_{k,i}\}_{(k,i)\in\mathcal{A}}$. Now, since $0<\varepsilon^\alpha_k\leq \varepsilon^\alpha_0 $ for every $k$, the boundedness of $\{ \bm d_{k,i}\}_{ (k,i)\in\mathcal{A}}$ follows from 
	$$\bm d_{k,i}=-\varepsilon^{\alpha}_k\, \bm{\tilde{g}}_{k,i}, \quad \text{for all} \,\, (k,i)\in\mathcal{A}.  $$ \qed
\end{proof}
Now we are in a position to state the principal result. Before it, let us denote the set of minimum points by
\[X^*:=\{\bm x^*\in\mathbb{R}^n\,\, : \,\, f(\bm x^*)\leq f(\bm x) \,\,\, \text{for all} \,\, \bm x\in\mathbb{R}^n \}. \]
\begin{theorem}\label{Th1}
	Suppose that Algorithm \ref{B-GS-Alg} generates the infinite sequence $\{\bm x_k\}_{k\in\mathbb{N}_0}$ and $X^*\neq\emptyset$. Then $\bm x_k\to \bm{\tilde{x}}$ such that
	$$\bm 0\in \partial f(\bm{\tilde{x}}).$$			
\end{theorem}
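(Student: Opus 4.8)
The plan is to first invoke the convergence already in hand and then exhibit a subsequence along which the stationarity measure $v_{k,i}=\tfrac12\|\tilde{\bm g}_{k,i}\|^2+\tilde e_{k,i}$ vanishes. Since $X^\ast\neq\emptyset$, any $\bar{\bm x}\in X^\ast$ is a global minimizer, so Assumption \ref{Assumption} holds and Corollary \ref{Corollary1} yields a limit $\tilde{\bm x}$ with $\bm x_k\to\tilde{\bm x}$ and $f(\tilde{\bm x})\le f(\bm x_k)$ for all $k$; it remains only to prove $\bm 0\in\partial f(\tilde{\bm x})$. My strategy is to produce indices $(k_\ell,i_\ell)$ with $\tilde{\bm g}_{k_\ell,i_\ell}\to\bm 0$ and $\tilde e_{k_\ell,i_\ell}\to 0$. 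Then, since $\tilde{\bm g}_{k,i}\in\partial_{\tilde e_{k,i}}f(\bm x_k)$ by \eqref{tilde-e-g-in-process}, passing to the limit in $f(\bm y)\ge f(\bm x_{k_\ell})+\langle\tilde{\bm g}_{k_\ell,i_\ell},\bm y-\bm x_{k_\ell}\rangle-\tilde e_{k_\ell,i_\ell}$ and using continuity of $f$ gives $f(\bm y)\ge f(\tilde{\bm x})$ for every $\bm y$, i.e. $\bm 0\in\partial f(\tilde{\bm x})$.

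To build the subsequence I would split on the behaviour of the sampling radius. Since $\varepsilon_k$ is either left unchanged (serious step) or multiplied by $\mu\in(0,1)$ (null step), $\{\varepsilon_k\}$ is nonincreasing and positive, hence $\varepsilon_k\downarrow\bar\varepsilon\ge 0$. In the first case $\bar\varepsilon>0$, only finitely many null steps occur, so the set $\mathcal{K}$ of serious-step indices is cofinite and $\varepsilon_j=\bar\varepsilon$ for all large $j\in\mathcal{K}$. The series estimate obtained inside the proof of Lemma \ref{Lemma-1}, namely $\sum_{j\in\mathcal{K}}\bigl(\varepsilon_j^{-\alpha}\|\bm x_{j+1}-\bm x_j\|^2+\tilde e_{j,i_j}\bigr)<\infty$, then forces $\tilde e_{j,i_j}\to 0$ and $\|\bm x_{j+1}-\bm x_j\|\to 0$ along $\mathcal{K}$; since $\bm x_{j+1}-\bm x_j=\bm d_{j,i_j}+(\hat{\bm x}_{j,i_j}-\bm x_j)$ with the $\texttt{FDP1}$-perturbation negligible and $\bm d_{j,i_j}=-\varepsilon_j^\alpha\tilde{\bm g}_{j,i_j}$ by \eqref{Update-p3b}, dividing by the fixed $\bar\varepsilon^{\alpha}$ gives $\|\tilde{\bm g}_{j,i_j}\|\to 0$, as required.

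In the second case $\bar\varepsilon=0$, i.e. $\varepsilon_k\to 0$, infinitely many null steps occur; let $\mathcal{N}$ collect their indices and let $\hat\imath_k$ be the terminal inner index at such a $k$. Here I would exploit the \emph{failure} of criterion \eqref{Criterion-i}: violation of its second alternative gives $v_{k,\hat\imath_k}<|f(\bm x_k+\bm d_{k,\hat\imath_k})-f(\bm x_k)|$. Because $\bm x_k\to\tilde{\bm x}$ and $\bm d_{k,\hat\imath_k}=-\varepsilon_k^\alpha\tilde{\bm g}_{k,\hat\imath_k}\to\bm 0$ (the factor $\varepsilon_k^\alpha\to 0$ while $\{\tilde{\bm g}_{k,i}\}$ is bounded by Corollary \ref{Corollary2}), all these points eventually lie in a neighbourhood of $\tilde{\bm x}$ on which $f$ is Lipschitz with some constant $L$; hence $|f(\bm x_k+\bm d_{k,\hat\imath_k})-f(\bm x_k)|\le L\|\bm d_{k,\hat\imath_k}\|=L\varepsilon_k^\alpha\|\tilde{\bm g}_{k,\hat\imath_k}\|\to 0$. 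Consequently $v_{k,\hat\imath_k}=\tfrac12\|\tilde{\bm g}_{k,\hat\imath_k}\|^2+\tilde e_{k,\hat\imath_k}\to 0$ along $\mathcal{N}$, so again $\tilde{\bm g}_{k,\hat\imath_k}\to\bm 0$ and $\tilde e_{k,\hat\imath_k}\to 0$, and the limiting argument of the first paragraph closes the proof.

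I expect the null-step case $\bar\varepsilon=0$ to be the main obstacle, since there the descent-based series estimate of Lemma \ref{Lemma-1} carries no information about $\|\tilde{\bm g}\|$ (the weight $\varepsilon_j^\alpha$ degenerates); the crucial idea is that a persistently failing second clause of \eqref{Criterion-i}, combined with the shrinking step $\bm d_{k,\hat\imath_k}\to\bm 0$ and the boundedness from Corollary \ref{Corollary2}, squeezes $v_{k,\hat\imath_k}$ to zero. A secondary technical point is the treatment of the $\texttt{FDP1}$/$\texttt{FDP2}$ perturbations: as elsewhere in the paper I would absorb $\|\hat{\bm x}_{k,i}-\bm x_k\|$ into an arbitrarily small quantity controlled by $\sigma$, so that $\bm x_{j+1}-\bm x_j$ and $\bm d_{j,i_j}$ may be identified in the limit.
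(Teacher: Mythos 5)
Your proposal is correct and follows essentially the same route as the paper: the identical case split on whether $\varepsilon_k$ tends to zero or stays bounded away from it, the same use of the failing second clause of \eqref{Criterion-i} together with the Lipschitz bound $|f(\bm x_k+\bm d_{k,i})-f(\bm x_k)|\leq M\varepsilon_k^\alpha\|\tilde{\bm g}_{k,i}\|$ in the first case, and the same descent-based summability argument in the second. The only cosmetic differences are that you argue directly along the null-step subsequence where the paper argues by contradiction, and you pass to the limit in the subgradient inequality rather than invoking upper semicontinuity of $\partial_\cdot f(\cdot)$; both are equivalent.
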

\begin{proof}
	Since $X^*\neq\emptyset$, obviously Assumption \ref{Assumption} holds. Thus, by Corollary \ref{Corollary1}, we know that $\bm x_k\to \bm{\tilde{x}}$. To show $\bm 0\in \partial f(\bm{\tilde{x}})$,  we need to consider two cases.\\
	\emph{Case a.} $\varepsilon^\alpha_k\downarrow0$ as $k\to\infty$. 
	
	First, we show that there is $\mathcal{A}'\subset\mathcal{A}$ such that
	$$v_{k,i}=\frac{1}{2}\lVert \bm{\tilde{g}}_{k,i}\rVert^2+\tilde{e}_{k,i}\to 0\quad \text{as}\quad k\to\infty, \,\, (k,i)\in\mathcal{A}'. $$
	By contradiction, assume that there is $\bar{v}>0$ such that
	$$v_{k,i}\geq \bar{v}, \quad \text{for all} \,\, (k,i)\in\mathcal{A}. $$
	The convergence of the sequence $\{\bm x_k\}_{k\in\mathbb{N}_0}$ along with the boundedness of the sequence $\{\bm d_{k,i} \}_{(k,i)\in\mathcal{A}}$ yield the boundedness of the sequence $\{\bm x_k+ \bm d_{k,i} \}_{(k,i)\in\mathcal{A}}$. Now the locally Lipschitzness of $f$ implies the existence of $M>0$ such that
	\begin{align}
	\lvert f(\bm x_k+ \bm d_{k,i})-f(\bm x_k)\rvert&\leq M \lVert \bm d_{k,i}\rVert  \nonumber\\&
	\leq M \varepsilon^\alpha_k\, \lVert \bm{\tilde{g}}_{k,i}\rVert \label{Lip-inequality} \quad (\text{use} \, \eqref{Update-p3b}).
	\end{align}
	By Corollary \ref{Corollary2}, there is $C>0$ such that $\lVert \bm{\tilde{g}}_{k,i}\rVert\leq C$ for all $(k,i)\in\mathcal{A}$. Therefore
	$$\lvert f(\bm x_k+ \bm d_{k,i})-f(\bm x_k)\rvert\leq M \varepsilon^\alpha_k\,C. $$
	Since $\varepsilon^\alpha_k\downarrow0$ as $k\to\infty$, for $\bar{k}\in\mathbb{N}_0$ sufficiently large, we can write
	$$\lvert f(\bm x_k+ \bm d_{k,i})-f(\bm x_k)\rvert\leq \bar{v} \leq v_{k,i}= \frac{1}{2}\lVert \bm{\tilde{g}}_{k,i}\rVert^2+\tilde{e}_{k,i}, \quad \text{for all}\,\, (k,i)\in\mathcal{A}, \,\, k\geq \bar{k}.  $$
	The above inequality means that for every $k\geq \bar{k}$, the second part of the criterion \eqref{Criterion-i} holds, and therefore, Algorithm \ref{B-GS-Alg} does not reduce the sampling radius $\varepsilon^\alpha_k$ for such $k$. In other words, $\varepsilon^\alpha_k=\varepsilon^\alpha_{\bar k}$ for all $k\geq \bar k$, which contradicts $\varepsilon^\alpha_k\downarrow0$ as $k\to\infty$. Thus,  there is $\mathcal{A}'\subset\mathcal{A}$ such that
	$$v_{k,i}=\frac{1}{2}\lVert \bm{\tilde{g}}_{k,i}\rVert^2+\tilde{e}_{k,i}\to 0\quad \text{as}\quad k\to\infty, \,\, (k,i)\in\mathcal{A}', $$ 
	in other words
	$$\bm{\tilde{g}}_{k,i}\to 0 \quad \text{and} \quad \tilde{e}_{k,i}\to0 \quad \text{as} \quad k\to\infty, \,\, (k,i)\in\mathcal{A}'.  $$
	Now \eqref{tilde-e-g-in-process} and the upper semicontinuity of the set valued map $\partial_\cdot f(\cdot)$ yield $\bm 0\in\partial f(\bm{\tilde{x}})$.\\
	\emph{Case b.} There is $\bar{\varepsilon}>0$ such that $\varepsilon^\alpha_k\geq \bar{\varepsilon}$, for all $k$.
	
	We know by construction that there exists the infinite subset $\mathcal{K}$ of $\mathbb{N}_0$ such that for every $k\in\mathcal{K}$ the point $\bm x_{k+1}$ is updated by Line $16$ of Algorithm \ref{B-GS-Alg} (otherwise $\varepsilon^\alpha_k\downarrow 0$ as $k\to\infty$). For each $k\in\mathcal{K}$, there exists $i_k$ such that
	\begin{equation}\label{R1}
	f(\bm x_k+\bm d_{k,i_k})-f(\bm x_k)\leq \beta z_{k,i_k}, \quad \text{for all} \,\, k\in\mathcal{K}.
	\end{equation}
	Furthermore, for each $k\in\mathbb{N}_0\setminus\mathcal{K}$ we have $\bm x_{k+1}=\bm x_k$. Hence
	\begin{equation}\label{R2}
	f(\bm x_{k+1})-f(\bm x_k)=0, \quad \text{for all} \,\, k\in\mathbb{N}_0\setminus \mathcal{K}.
	\end{equation} 
	Using \eqref{R1} and \eqref{R2}, for an arbitrary $n\geq1$ one can write
	\begin{align*}
	f(\bm x_n)-f(\bm x_0)&=\left[f(\bm x_n)-f(\bm x_{n-1})]+\ldots+[f(\bm x_1)-f(\bm x_0)\right]\\&
	\leq \beta \sum_{\substack{k=1\\k\in\mathcal{K}}}^{n} z_{k,i_k}.
	\end{align*}
	Since $z_{k,i_k}\leq0$, letting $n$ approach infinity, we obtain
	$$-\infty<f(\bar{\bm x})-f(\bm x_0)\leq  \sum_{\substack{k=1\\k\in\mathcal{K}}}^{\infty} z_{k,i_k}\leq 0. $$
	This means that,
	$$z_{k,i_k}=-(\varepsilon^\alpha_k \lVert \bm{\tilde{g}}_{k,i_k}\rVert^2 +\tilde{e}_{k,i_k})\to 0 \quad \text{as} \quad k\to\infty , \, k\in\mathcal{K}. $$
	Eventually, since $\varepsilon^\alpha_k\geq \bar{\varepsilon}$ for every $k$, we conclude
	$$  \bm{\tilde{g}}_{k,i_k}\to \bm 0 \quad \text{and}\quad \tilde{e}_{k,i_k}\to 0 \quad \text{as} \quad k\to\infty , \, k\in\mathcal{K}.  $$
	Therefore, \eqref{tilde-e-g-in-process} and the upper semicontinuity of the set valued map $\partial_\cdot f(\cdot)$ imply $\bm 0\in\partial f(\bm{\tilde{x}})$. \qed
\end{proof}
Even if $f$ does not attain its minimum, we still have the following result.
\begin{theorem}
	If $X^*=\emptyset$, then 
	\[ f(\bm x_k)\downarrow\inf\{f(\bm x) \,\, : \,\, \bm x\in\mathbb{R}^n\}.  \]
\end{theorem}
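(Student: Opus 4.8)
The plan is to argue by contradiction, exploiting the fact that the entire \emph{Case I} machinery developed under Assumption \ref{Assumption} never uses the existence of a genuine minimizer, but only the existence of a single lower-bounding point. First I would record that, by construction of Algorithm \ref{B-GS-Alg}, the sequence $\{f(\bm x_k)\}$ is nonincreasing: at a serious step (Line $16$, through \texttt{FDP1}) we have $f(\bm x_{k+1})-f(\bm x_k)\le\beta z_{k,i}\le 0$, since $z_{k,i}=-\varepsilon_k^\alpha\lVert\tilde{\bm g}_{k,i}\rVert^2-\tilde e_{k,i}\le 0$, whereas at a null step $\bm x_{k+1}=\bm x_k$ leaves the value unchanged. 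Hence $f(\bm x_k)\downarrow L$ for some $L\in\mathbb{R}\cup\{-\infty\}$ with $L\ge f^*$, where $f^*:=\inf\{f(\bm x):\bm x\in\mathbb{R}^n\}$. Because the convergence is monotone, it suffices to prove $L=f^*$.

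Suppose, for contradiction, that $L>f^*$. By the definition of the infimum there is a point $\bar{\bm x}\in\mathbb{R}^n$ with $f(\bar{\bm x})<L$, and since $f(\bm x_k)\ge L$ for every $k$ we get $f(\bar{\bm x})\le f(\bm x_k)$ for all $k$; that is, Assumption \ref{Assumption} holds for this $\bar{\bm x}$. The crucial observation is that the proof of Theorem \ref{Th1}, via Lemma \ref{Lemma-1}, Corollary \ref{Corollary1}, Lemma \ref{Lemma2}, Corollary \ref{Corollary2}, and the subsequent split into the subcases $\varepsilon_k^\alpha\downarrow 0$ and $\varepsilon_k^\alpha\ge\bar\varepsilon$, invokes $X^*\neq\emptyset$ only to guarantee Assumption \ref{Assumption} and to use the inequality $f(\bar{\bm x})\le f(\bm x_k)$. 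Both of these now hold verbatim for our near-infimum point $\bar{\bm x}$. I would therefore rerun precisely that argument to obtain $\bm x_k\to\tilde{\bm x}$ together with $\bm 0\in\partial f(\tilde{\bm x})$.

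It then follows from the optimality characterization recalled in Section \ref{Preliminaries}, namely $\bm 0\in\partial f(\tilde{\bm x})$ if and only if $\tilde{\bm x}$ minimizes $f$, that $\tilde{\bm x}\in X^*$. This contradicts $X^*=\emptyset$. Consequently $L=f^*$, and together with monotonicity this gives $f(\bm x_k)\downarrow\inf\{f(\bm x):\bm x\in\mathbb{R}^n\}$, as claimed. The same reasoning covers the unbounded case $f^*=-\infty$: the only alternatives for the monotone sequence are $f(\bm x_k)\to-\infty=f^*$, which is the desired conclusion, or convergence to a finite $L>-\infty=f^*$, which the contradiction above excludes.

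I expect the only real obstacle to be bookkeeping rather than a new estimate: since the argument is a reuse, the work lies in verifying that no step in the chain Lemma \ref{Lemma-1}--Theorem \ref{Th1} secretly depends on $\bar{\bm x}$ being an exact minimizer. The sensitive point is Case b of Theorem \ref{Th1}, where the telescoped bound $f(\bar{\bm x})-f(\bm x_0)\le\sum_{k\in\mathcal K}z_{k,i_k}\le 0$ relies on $f(\bar{\bm x})\le f(\bm x_k)$; as this is exactly Assumption \ref{Assumption}, it remains valid, and the boundedness results feeding Case a depend on Assumption \ref{Assumption} alone as well.
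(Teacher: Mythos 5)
Your proposal is correct and takes essentially the same route as the paper: negate the conclusion to produce a point $\bar{\bm x}$ satisfying Assumption \ref{Assumption}, rerun the Case I machinery culminating in Theorem \ref{Th1} to obtain $\bm x_k\to\tilde{\bm x}$ with $\bm 0\in\partial f(\tilde{\bm x})$, and contradict $X^*=\emptyset$. Your version merely spells out more carefully (via the limit $L$ and the explicit check that Lemma \ref{Lemma-1} through Theorem \ref{Th1} depend only on Assumption \ref{Assumption}, not on $\bar{\bm x}$ being a true minimizer) what the paper states in one line as ``similar to Theorem \ref{Th1}.''
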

\begin{proof}
	We know, by construction, the sequence $\{f(\bm x_k)\}_{k\in\mathbb{N}_0}$ is a decreasing sequence. Therefore, if the assertion is not true, there is $\bm x'\in\mathbb{R}^n$ such that $f(\bm x')\leq f(\bm x_k)$ for all $k\in\mathbb{N}_0$, which means that Assumption \ref{Assumption} holds. Hence, similar to Theorem \ref{Th1}, one can conclude that $\bm x_k\to\bm{\tilde{x}}\in X^*$. This contradicts $X^*=\emptyset$. \qed
\end{proof}

Next, we study the convergence of the method for \textit{Case II}. In other words, let us assume that there is $\bar{k}\in\mathbb{N}_0$ for which the inner loop of Algorithm \ref{B-GS-Alg} does not terminate. We start with the following lemma.

\begin{lemma}
	Assume that for a $\bar{k}\in\mathbb{N}_0$, the inner loop of Algorithm \ref{B-GS-Alg} does not terminate. Then, for every $i\in\mathbb{N}_0$, the following inequality holds
	\begin{equation}\label{INEQUALITY}
	\bigl\langle \nabla f(\bm s_{\bar k,m+i+1}), \bm{\tilde{g}}_{\bar k,i}\bigr\rangle \leq -\frac{1}{\varepsilon^\alpha_{\bar k}} e_{\bar k,m+i+1}+\beta \left( \lVert \bm{\tilde{g}}_{\bar k,i}\lVert^2+\frac{1}{\varepsilon^\alpha_{\bar k}} \tilde{e}_{\bar k,i} \right).
	\end{equation}
\end{lemma}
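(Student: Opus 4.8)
The plan is to reduce \eqref{INEQUALITY} to a single, more transparent estimate and then merely unwind the definitions. Using $\bm d_{\bar k,i}=-\varepsilon^\alpha_{\bar k}\,\tilde{\bm g}_{\bar k,i}$ from \eqref{Update-p3b} and $z_{\bar k,i}=-\varepsilon^\alpha_{\bar k}\lVert\tilde{\bm g}_{\bar k,i}\rVert^2-\tilde e_{\bar k,i}$ from \eqref{Update-p3a}, one checks that \eqref{INEQUALITY} is equivalent, after dividing through by $-\varepsilon^\alpha_{\bar k}<0$ (which reverses the inequality), to
\[
\bigl\langle\nabla f(\bm s_{\bar k,m+i+1}),\bm d_{\bar k,i}\bigr\rangle-e_{\bar k,m+i+1}\geq \beta z_{\bar k,i}.
\]
Thus I would first establish this last inequality for every $i$ and then substitute back to recover the stated form.

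Since by hypothesis the inner loop at $\bar k$ never terminates, the sufficient decrease test fails at every $i$, i.e. $f(\bm x_{\bar k}+\bm d_{\bar k,i})-f(\bm x_{\bar k})>\beta z_{\bar k,i}$. This is exactly the \emph{required} hypothesis of \texttt{FDP2}, so the auxiliary point $\bm s_{\bar k,m+i+1}=\hat{\bm x}_{\bar k,i}+\bm d_{\bar k,i}$ it returns lies in $D$, obeys $\lVert\hat{\bm x}_{\bar k,i}-\bm x_{\bar k}\rVert<\sigma$, and still satisfies the strict violation $f(\bm s_{\bar k,m+i+1})-f(\bm x_{\bar k})>\beta z_{\bar k,i}$. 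Next, repeating verbatim the expansion already used to prove non-redundancy of the new constraint (substitute \eqref{error-of-new-linearization} together with $\bm x_{\bar k}-\bm s_{\bar k,m+i+1}=\bm x_{\bar k}-\hat{\bm x}_{\bar k,i}-\bm d_{\bar k,i}$) yields the identity
\[
\bigl\langle\nabla f(\bm s_{\bar k,m+i+1}),\bm d_{\bar k,i}\bigr\rangle-e_{\bar k,m+i+1}=f(\bm s_{\bar k,m+i+1})-f(\bm x_{\bar k})-\bigl\langle\nabla f(\bm s_{\bar k,m+i+1}),\hat{\bm x}_{\bar k,i}-\bm x_{\bar k}\bigr\rangle.
\]
Combining this identity with the strict violation and estimating the last inner product by Cauchy--Schwarz gives $\bigl\langle\nabla f(\bm s_{\bar k,m+i+1}),\bm d_{\bar k,i}\bigr\rangle-e_{\bar k,m+i+1}>\beta z_{\bar k,i}-\lVert\nabla f(\bm s_{\bar k,m+i+1})\rVert\,\lVert\hat{\bm x}_{\bar k,i}-\bm x_{\bar k}\rVert$, and, exactly as elsewhere in the paper, choosing $\sigma$ small enough to absorb the perturbation term produces the desired estimate $\bigl\langle\nabla f(\bm s_{\bar k,m+i+1}),\bm d_{\bar k,i}\bigr\rangle-e_{\bar k,m+i+1}\geq \beta z_{\bar k,i}$.

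The main obstacle is justifying that the perturbation term $\lVert\nabla f(\bm s_{\bar k,m+i+1})\rVert\,\lVert\hat{\bm x}_{\bar k,i}-\bm x_{\bar k}\rVert$ can be neglected \emph{uniformly in} $i$, since the claim must hold for every index of a nonterminating inner loop while $\sigma$ stays fixed. To handle this I would observe that the bound $w_{\bar k,i}\leq\tfrac{1}{2}\lVert\nabla f(\bm x_{\bar k})\rVert^2$ proved in Lemma \ref{Lemma2} uses only properties of the subproblem and not the convergence of $\{\bm x_k\}$, hence it applies here and bounds $\lVert\tilde{\bm g}_{\bar k,i}\rVert$ and therefore $\bm d_{\bar k,i}$; consequently all auxiliary points $\bm s_{\bar k,m+i+1}$ remain in a fixed bounded set and, by local boundedness of $\nabla f$, the norms $\lVert\nabla f(\bm s_{\bar k,m+i+1})\rVert$ admit a single bound independent of $i$. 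Once this uniform control is in place, dividing the established estimate by $-\varepsilon^\alpha_{\bar k}$ and inserting \eqref{Update-p3a}--\eqref{Update-p3b} is routine bookkeeping and delivers \eqref{INEQUALITY}.
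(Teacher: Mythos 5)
Your proof is correct and follows essentially the same route as the paper's: expand $e_{\bar k,m+i+1}$ using $\bm s_{\bar k,m+i+1}=\hat{\bm x}_{\bar k,i}+\bm d_{\bar k,i}$, invoke the failed sufficient decrease test $f(\bm s_{\bar k,m+i+1})-f(\bm x_{\bar k})>\beta z_{\bar k,i}$, absorb the perturbation term via Cauchy--Schwarz and small $\sigma$, and substitute \eqref{Update-p3a}--\eqref{Update-p3b}. Your closing remark on making the perturbation bound uniform in $i$ is a welcome refinement of a point the paper passes over silently, but it does not change the argument.
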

\begin{proof}
	For the sake of brevity in notations, we let $m_+$ denote $m+i+1$. First, we note that
	\begin{align*}
	\bm x_{\bar k}-\bm s_{\bar k,m_+}&=\bm x_{\bar k} - \texttt{FDP2}(\bm x_{\bar k}, \bm d_{\bar k,i}, \beta, z_{\bar k,i}, \sigma)\\&=\bm x_{\bar k}-(\hat{\bm x}_{\bar k,i}+\bm d_{\bar k,i})\\&
	=-(\hat{\bm x}_{\bar k,i}-\bm x_{\bar k})-\bm d_{\bar k,i}.
	\end{align*}
	Now, we have
	\begin{align*}
	e_{\bar k,m_+}&=f(\bm x_{\bar k})-\left[f(\bm s_{\bar k,m_+})+ \bigl\langle \nabla f(\bm s_{\bar k,m_+}), \bm x_{\bar k}-\bm s_{\bar k,m_+}\bigr\rangle\right] \nonumber\\&
	=f(\bm x_{\bar k})-f(\bm s_{\bar k,m_+})+ \bigl\langle \nabla f(\bm s_{\bar k,m_+}), \hat{\bm x}_{\bar k,i}-\bm x_{\bar k}\bigr\rangle+ \bigl\langle \nabla f(\bm s_{\bar k,m_+}), \bm d_{\bar k,i}\bigr\rangle \nonumber\\&
	\leq f(\bm x_{\bar k})-f(\bm s_{\bar k,m_+})+  \bigl\langle \nabla f(\bm s_{\bar k,m_+}), \bm d_{\bar k,i}\bigr\rangle +\lVert \nabla f(\bm s_{\bar k,m_+}) \rVert  \lVert  \hat{\bm x}_{\bar k,i}-\bm x_{\bar k}\rVert .
	\end{align*}
	Since we cam make $\lVert  \hat{\bm x}_{\bar k,i}-\bm x_{\bar k}\rVert$ arbitrarily small, the later inequality implies that
	\begin{align}
	e_{\bar k,m_+}& \leq f(\bm x_{\bar k})-f(\bm s_{\bar k,m_+})+  \bigl\langle \nabla f(\bm s_{\bar k,m_+}), \bm d_{\bar k,i}\bigr\rangle \nonumber\\&
	=f(\bm x_{\bar k})-f(\bm s_{\bar k,m_+})-\varepsilon^\alpha_{\bar k} \, \bigl\langle \nabla f(\bm s_{\bar k,m_+}),\bm{\tilde{g}}_{\bar k,i}\bigr\rangle \quad (\text{by} \,\, \eqref{Update-p3b}). \label{INEQ}
	\end{align}
	We recall that for every $i\in\mathbb{N}_0$, we have $f(\bm s_{\bar k,m_+})-f(\bm x_{\bar k})>\beta z_{\bar k,i} $. Therefore, we can continue \eqref{INEQ} with
	\begin{align*}
	e_{\bar k,m_+}& \leq -\beta z_{\bar k,i} -\varepsilon^\alpha_{\bar k} \, \bigl\langle \nabla f(\bm s_{\bar k,m_+}),\bm{\tilde{g}}_{\bar k,i}\bigr\rangle\\&
	= \beta \left( \varepsilon^\alpha_{\bar k} \, \lVert \bm{\tilde{g}}_{\bar k,i}\lVert^2+ \tilde{e}_{\bar k,i}  \right)-\varepsilon^\alpha_{\bar k} \, \bigl\langle \nabla f(\bm s_{\bar k,m_+}),\bm{\tilde{g}}_{\bar k,i}\bigr\rangle \quad (\text{by}\,\, \eqref{Update-p3a}),
	\end{align*}
	and after a simple manipulation, we obtain \eqref{INEQUALITY}.  \qed
	
	\begin{lemma}\label{Bounded-seq}
		Assume that for a $\bar{k}\in\mathbb{N}_0$ the inner loop of Algorithm \ref{B-GS-Alg} does not terminate. Then the sequences $\{w_{\bar k, i}\}_i$, $\{\bm{\tilde{g}}_{\bar k,i}\}_i$ and $\{\bm d_{\bar k,i}\}_i$ $(i\in\mathbb{N}_0)$ are bounded.	
	\end{lemma}
	\begin{proof}
		In the proof of Lemma \ref{Lemma2}, replace $k$ by $\bar{k}$ to see
		$$w_{\bar k,i}\leq \frac{1}{2}\lVert \nabla f(\bm x_{\bar k})\rVert, \quad \text{for all} \,\, i\in \mathbb{N}_0, $$
		which means that $\{w_{\bar k, i}\}_{i}$ is bounded. Now similar to Corollary \ref{Corollary2}, the boundedness of sequences $\{\bm{\tilde{g}}_{\bar k,i}\}_i$ and $\{\bm d_{\bar k,i}\}_i$ follows from the following relations 
		\begin{align*}
		&w_{\bar k,i}=\frac{1}{2} \lVert \bm{\tilde{g}}_{\bar k,i}\rVert^2+\tilde{e}_{\bar k,i}, \quad \text{for all} \,\, i\in\mathbb{N}_0, \\&
		\bm d_{\bar k,i}=-\varepsilon^{\alpha}_{\bar k}\, \bm{\tilde{g}}_{\bar k,i}, \quad \text{for all} \,\, i\in\mathbb{N}_0.
		\end{align*}\qed

	\end{proof}
	
\end{proof}
Now we are ready to state the main theorem for \emph{Case II}.
\begin{theorem}\label{Theorem3}
	Assume that for a $\bar{k}\in\mathbb{N}_0$ the inner loop of Algorithm \ref{B-GS-Alg} does not terminate. Then
	$$\bm 0\in\partial f(\bm x_{\bar k}).$$
\end{theorem}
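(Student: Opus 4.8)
The plan is to show that the stationarity measure $v_{\bar k,i}=\tfrac12\lVert\tilde{\bm g}_{\bar k,i}\rVert^2+\tilde e_{\bar k,i}$ tends to $0$ as $i\to\infty$; since \eqref{tilde-e-g-in-process} gives $\tilde{\bm g}_{\bar k,i}\in\partial_{\tilde e_{\bar k,i}}f(\bm x_{\bar k})$ for every $i$, the upper semicontinuity of the map $\partial_\bullet f(\bullet)$ then forces $\bm 0\in\partial f(\bm x_{\bar k})$. Because $\varepsilon_{\bar k}$ is frozen throughout the nonterminating inner loop and $\varepsilon_{\bar k}>0$, it is equivalent and more convenient to prove $w_{\bar k,i}=\tfrac12\lVert\tilde{\bm g}_{\bar k,i}\rVert^2+\varepsilon_{\bar k}^{-\alpha}\tilde e_{\bar k,i}\to 0$.

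First I would establish that $\{w_{\bar k,i}\}_i$ is nonincreasing. At step $i$ the dual subproblem \eqref{Dual-sub-in process} is minimized over the simplex in the variables $(\lambda_j;j\in J_{\bar k,i},\tilde\lambda)$, and the point $\tilde\lambda=1$, $\lambda_j=0$ is feasible with objective value exactly $w_{\bar k,i}$ by \eqref{optimal-value-of-Dualsub-in-process}. Hence $w_{\bar k,i+1}\le w_{\bar k,i}$, and being bounded below by $0$ the sequence converges, so $w_{\bar k,i}-w_{\bar k,i+1}\to 0$.

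The heart of the argument is a quantitative decrease estimate. Since $m+i+1\in J_{\bar k,i}$ (see \eqref{J_ki}) and $\tilde{\bm g}_{\bar k,i}$ enters through $\tilde\lambda$, the one-parameter family $\tilde\lambda=1-t$, $\lambda_{m+i+1}=t$ (other weights zero), $t\in[0,1]$, is feasible for \eqref{Dual-sub-in process}; its objective value is the quadratic $\psi_i(t)=\tfrac12\lVert\tilde{\bm g}_{\bar k,i}+t(\bm g_i-\tilde{\bm g}_{\bar k,i})\rVert^2+\varepsilon_{\bar k}^{-\alpha}(\tilde e_{\bar k,i}+t(e_{\bar k,m+i+1}-\tilde e_{\bar k,i}))$, where $\bm g_i:=\nabla f(\bm s_{\bar k,m+i+1})$, so $\psi_i(0)=w_{\bar k,i}$ and $w_{\bar k,i+1}\le\min_{t\in[0,1]}\psi_i(t)$. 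Differentiating and inserting the key inequality \eqref{INEQUALITY} collapses the cross term, yielding $\psi_i'(0)\le-(1-\beta)\bigl(\lVert\tilde{\bm g}_{\bar k,i}\rVert^2+\varepsilon_{\bar k}^{-\alpha}\tilde e_{\bar k,i}\bigr)=:-(1-\beta)b_i$, with $w_{\bar k,i}\le b_i\le 2w_{\bar k,i}$. By Lemma \ref{Bounded-seq} the aggregates $\tilde{\bm g}_{\bar k,i}$ are bounded, and the auxiliary points $\bm s_{\bar k,m+i+1}=\hat{\bm x}_{\bar k,i}+\bm d_{\bar k,i}$ lie in a bounded set, so local boundedness of $\nabla f$ bounds $\bm g_i$; thus the constant curvature $\psi_i''\equiv\lVert\bm g_i-\tilde{\bm g}_{\bar k,i}\rVert^2$ is bounded by some $A>0$ uniformly in $i$. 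Minimizing $\psi_i(t)\le w_{\bar k,i}-(1-\beta)b_i\,t+\tfrac12 A t^2$ over $[0,1]$ then gives $w_{\bar k,i}-w_{\bar k,i+1}\ge c\,w_{\bar k,i}^2$ for all large $i$, for a constant $c>0$.

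Combined with $w_{\bar k,i}-w_{\bar k,i+1}\to 0$, this forces $w_{\bar k,i}\to 0$, whence $\tilde{\bm g}_{\bar k,i}\to\bm 0$ and $\tilde e_{\bar k,i}\to 0$, and the conclusion $\bm 0\in\partial f(\bm x_{\bar k})$ follows as above. The main obstacle is the last estimate: one must check that the unconstrained minimizer $t^*=(1-\beta)b_i/A$ of the quadratic lies in $[0,1]$. If $t^*>1$ occurs, then $b_i$ (hence $w_{\bar k,i}$) is bounded below by a positive constant and the value $\psi_i(1)$ already produces a decrease bounded away from $0$, which can happen only finitely often since $\{w_{\bar k,i}\}$ converges; so for all large $i$ one has $t^*\le 1$ and the $O(w_{\bar k,i}^2)$ decrease is valid. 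A minor point is that \eqref{INEQUALITY} was derived by letting $\lVert\hat{\bm x}_{\bar k,i}-\bm x_{\bar k}\rVert$ be arbitrarily small, which is legitimate because $\sigma$ may be fixed as small as desired in \texttt{FDP2}.
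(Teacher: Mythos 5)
Your proposal is correct and follows essentially the same route as the paper's proof: the same one-parameter feasible family $\tilde\lambda=1-t$, $\lambda_{m+i+1}=t$ in the dual subproblem \eqref{Dual-sub-in process}, the same use of inequality \eqref{INEQUALITY} to bound the linear term of the resulting quadratic, the same uniform curvature bound from Lemma \ref{Bounded-seq} and local boundedness of $\nabla f$, and the same conclusion via the quadratic decrease $w_{\bar k,i}-w_{\bar k,i+1}\geq c\,w_{\bar k,i}^2$ and upper semicontinuity of $\partial_\bullet f(\bullet)$. The only cosmetic difference is that the paper verifies directly that the minimizer $\bar t$ of the quadratic majorant lies in $[0,1]$ (using $w_{\bar k,i}\leq C_{\bar k,i}^2/2+C_{\bar k,i}$ with $C_{\bar k,i}\geq 1$), whereas you dispose of the case $t^*>1$ by a finitely-often argument; both are valid.
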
	
\begin{proof}
	
	First, we prove that
	$$w_{\bar k,i+1}\leq w_{\bar k,i}, \quad \text{for all}\,\, i\in\mathbb{N}_0. $$
	We know that $w_{\bar k,i+1}$ is the optimal value of subproblem \eqref{Dual-sub-in process}. Let $t\in[0,1]$ be arbitrary and define the vector $\boldsymbol{\lambda}=(\lambda_j; j\in J_{\bar k,i}, \tilde{\lambda})$ by 
	$$\lambda_j=0, \,\, j\in J_{\bar k,i}\setminus\{m+i+1\}, \,\,\,\, \lambda_{m+i+1}=t, \,\,\,\, \tilde{\lambda}=1-t. $$
	Then $\boldsymbol{\lambda}$ is feasible for subproblem \eqref{Dual-sub-in process}, and hence
	\begin{align*}
	w_{\bar k,i+1}&\leq \frac{1}{2}\lVert t \nabla f(\bm s_{\bar k,m+i+1})+ (1-t) \bm{\tilde{g}}_{\bar k,i}\rVert^2+ \frac{1}{\varepsilon^\alpha_{\bar k}} t\, e_{\bar k,m+i+1}+ \frac{1}{\varepsilon^\alpha_{\bar k}} (1-t)\tilde{e}_{\bar k,i} \nonumber \\&
	=:Q(t).
	\end{align*}
	Therefore
	\begin{equation}\label{K1}
	w_{\bar k,i+1}\leq Q(t), \quad \text{for all}\,\,t\in[0,1].
	\end{equation}
	Simple calculations yield
	\begin{align}
	Q(t)&=\frac{1}{2} t^2 \lVert \nabla f(\bm s_{\bar k,m+i+1})- \bm{\tilde{g}}_{\bar k,i}\rVert^2 + t \left(\bigl\langle \nabla f(\bm s_{\bar k,m+i+1}), \bm{\tilde{g}}_{\bar k,i}\bigr\rangle - \lVert  \bm{\tilde{g}}_{\bar k,i}\rVert^2\right) \nonumber \\&
	\qquad +\frac{1}{2}  \lVert  \bm{\tilde{g}}_{\bar k,i}\rVert^2 + \frac{1}{\varepsilon^\alpha_{\bar k} }\tilde{e}_{\bar k,i} + \frac{1}{\varepsilon^\alpha_{\bar k}} t (e_{\bar k,m+i+1}-\tilde{e}_{\bar k,i}) \nonumber\\&
	= \frac{1}{2} t^2 \lVert \nabla f(\bm s_{\bar k,m+i+1})- \bm{\tilde{g}}_{\bar k,i}\rVert^2 + t \left(\bigl\langle \nabla f(\bm s_{\bar k,m+i+1}), \bm{\tilde{g}}_{\bar k,i}\bigr\rangle - \lVert  \bm{\tilde{g}}_{\bar k,i}\rVert^2\right) \nonumber\\&
	\qquad +w_{k,i}+  \frac{1}{\varepsilon^\alpha_{\bar k}} t (e_{\bar k,m+i+1}-\tilde{e}_{\bar k,i}) \quad (\text{by}\,\, \eqref{optimal-value-of-Dualsub}).  \label{EQUALITY}
	\end{align}
	Using inequality \eqref{INEQUALITY}, we can continue \eqref{EQUALITY} with
	\begin{align}
	Q(t)&\leq \frac{1}{2} t^2 \lVert \nabla f(\bm s_{\bar k,m+i+1})- \bm{\tilde{g}}_{\bar k,i}\rVert^2 \nonumber \\&
	\qquad + t \left( -\frac{1}{\varepsilon^\alpha_{\bar k}} e_{\bar k,m+i+1}+ \beta \lVert \bm{\tilde{g}}_{\bar k,i}\rVert^2+ \frac{\beta}{\varepsilon^\alpha_{\bar k}} \tilde{e}_{\bar k,i}- \lVert \bm{\tilde{g}}_{\bar k,i}\rVert^2\right) \nonumber \\ &
	\qquad \quad+w_{\bar k,i}+  \frac{t}{\varepsilon^\alpha_{\bar k}}  (e_{\bar k,m+i+1}-\tilde{e}_{\bar k,i}) \nonumber \\&
	= \frac{1}{2} t^2 \lVert \nabla f(\bm s_{\bar k,m+i+1})- \bm{\tilde{g}}_{\bar k,i}\rVert^2-t(1-\beta)\left( \lVert \bm{\tilde{g}}_{\bar k,i}\rVert^2 +\frac{1}{\varepsilon^\alpha_{\bar k}} \tilde{e}_{\bar k,i}   \right)\nonumber  \\&
	\qquad -\frac{t}{\varepsilon^\alpha_{\bar k}}(e_{\bar k,m+i+1}-\tilde{e}_{\bar k,i})+w_{\bar k,i}+\frac{t}{\varepsilon^\alpha_{\bar k}}  (e_{\bar k,m+i+1}-\tilde{e}_{\bar k,i}) \nonumber \\&
	\leq \frac{1}{2} t^2 \lVert \nabla f(\bm s_{\bar k,m+i+1})- \bm{\tilde{g}}_{\bar k,i}\rVert^2 - t(1-\beta)w_{\bar k,i}+w_{\bar k,i} \quad (\text{by}\,\, \eqref{optimal-value-of-Dualsub}).  \nonumber
	\end{align}
	Now, if we set
	\begin{equation*}
	C_{\bar k,i}:=\max \{\lVert \bm{\tilde{g}}_{\bar k,i}\rVert, \lVert \nabla f(\bm s_{\bar k,m+i+1})\rVert, \frac{1}{\varepsilon^\alpha_{\bar k}}  \tilde{e}_{\bar k,i}, 1  \},
	\end{equation*}
	then, we observe
	\begin{equation}\label{K2}
	Q(t)\leq 2t^2C_{\bar{k},i}^2 -t(1-\beta)w_{\bar k,i}+w_{\bar k,i}=: \phi(t), \quad \text{for all} \,\, t\in[0,1].
	\end{equation}
	It is easy to check that
	$$\bar{t}:=\frac{(1-\beta)w_{\bar k,i}} {4C_{\bar k,i}^2}\leq \frac{(1-\beta)}{4C_{\bar k,i}^2}\left( \frac{C_{\bar k,i}^2}{2}+C_{\bar k,i}\right)\leq 1,  $$
	minimizes $\phi(t)$ and
	\begin{equation}\label{K3}
	\phi(\bar t)=w_{\bar k,i}-\frac{(1-\beta)^2w_{\bar k,i}^2}{8 C_{\bar k,i}^2}. 
	\end{equation}
	Consequently, in view of \eqref{K1}, \eqref{K2} and \eqref{K3}, we have
	\begin{equation}\label{main}
	w_{\bar k,i+1}\leq Q(\bar t) \leq \phi(\bar t)=w_{\bar k,i}-\frac{(1-\beta)^2w_{\bar k,i}^2}{8 C_{\bar k,i}^2}.
	\end{equation}  
	This means that
	\begin{equation}\label{DR}
	0\leq w_{\bar k,i+1}\leq w_{\bar k,i},\quad \text{for all} \,\, i\in\mathbb{N}_0. 
	\end{equation}
	Thus, the sequence $\{w_{\bar k, i}\}_{i\in\mathbb{N}_0}$ is decreasing and bounded from below, and therefore, it converges. Next, we show that $w_{\bar k,i}\downarrow0$ as $i\to\infty$.
	A glance at \eqref{DR}  shows that
	\begin{equation*}
	w_{\bar k,i}= \frac{1}{2} \lVert \bm{\tilde{g}}_{\bar k,i}\rVert^2+ \frac{1}{\varepsilon^\alpha_{\bar k}} \tilde{e}_{\bar k,i}\leq w_{\bar k,0},\quad \text{for all} \,\, i\geq 1,
	\end{equation*}
	which implies the existence of $C_1>0$, such that
	\begin{equation}\label{F1}
	\max \{\lVert \bm{\tilde{g}}_{\bar k,i}\rVert, \frac{1}{\varepsilon^\alpha_{\bar k}}  \tilde{e}_{\bar k,i}, 1  \}\leq C_1, \quad \text{for all} \,\, i\in\mathbb{N}_0.
	\end{equation}
	Furthermore
	\begin{equation*}
	\bm s_{\bar k,m+i+1}=\texttt{FDP2}(\bm x_{\bar k}, \bm d_{\bar k,i}, \beta, z_{\bar k,i}, \sigma)=\bm{\hat{x}}_{\bar k,i}+\bm d_{\bar k,i}.
	\end{equation*}
	We know that $\bm{\hat{x}}_{\bar k,i}\in B(\bm x_{\bar k},\sigma)$, and hence the boundedness of sequence $\{\bm d_{\bar k,i}\}_{i\in\mathbb{N}_0}$ yields the boundedness of the sequence $\{ \bm s_{\bar k,m+i+1} \}_{i\in\mathbb{N}_0}$. Now the local boundedness of the map $\nabla f:\mathbb{R}^n\to\mathbb{R}^n$ implies the existence of $C_2>0$ such that
	\begin{equation}\label{F2}
	\lVert\nabla f(\bm s_{\bar k,m+i+1})\rVert\leq C_2, \quad \text{for all} \,\, i\in\mathbb{N}_0. 
	\end{equation}
	Therefore, if we set $C_{\bar{k}}:=\max\{C_1, C_2\}$, in view of \eqref{F1} and \eqref{F2}, we have
	$$\max \{\lVert \bm{\tilde{g}}_{\bar k,i}\rVert, \lVert \nabla f(\bm s_{\bar k,m+i+1})\rVert, \frac{1}{\varepsilon^\alpha_{\bar k}} \tilde{e}_{\bar k,i}, 1  \}\leq  C_{\bar k}, \quad \text{for all} \,\, i\in\mathbb{N}_0, $$  
	and hence \eqref{main} is rewritten by 
	\begin{equation*}
	w_{\bar k,i+1}\leq w_{\bar k,i}-\frac{(1-\beta)^2 w_{\bar k,i}^2}{8 C_{\bar k}^2}, \quad \text{for all} \,\, i\in\mathbb{N}_0.
	\end{equation*} 
	Now assume $w_{\bar k,i}\downarrow A$ as $i\to \infty$. Let $i$ approach infinity in the above inequality to see
	$$A\leq A-\frac{(1-\beta)^2A^2}{8 C_{\bar k}^2}. $$
	Since $\beta\in(0,1)$, the later inequality yields $A=0$.
	Eventually, we observe that
	$$w_{\bar k,i}= \frac{1}{2} \lVert \bm{\tilde{g}}_{\bar k, i}\rVert^2 + \frac{1}{\varepsilon^\alpha_{\bar k}} \tilde{e}_{\bar k,i}\to 0, \quad \text{as} \quad i\to\infty, $$
	which means that $\bm{\tilde{g}}_{\bar k, i}\to \bm 0$ and $\tilde{e}_{\bar k,i}\to0$ as $i\to\infty$. Now from \eqref{tilde-e-g-in-process} and the upper semicontinuity of the map $\partial_\cdot f(\cdot)$,  we conclude $\bm 0\in\partial f(\bm x_{\bar k})$. \qed
\end{proof}

The assumptions on which the convergence of the method was established were much weaker than the GS like methods. In particular, the objective $f$ need not be continuously differentiable on an open set with full measure in $\mathbb{R}^n$. Moreover,
we had no assumption on the sample size $m\in \mathbb N$. Thus,
only $\mathcal O(1)$ gradient evaluations to establish the initial polyhedral model, and a single gradient evaluation at each iteration of the improvement process ensure the convergence of the algorithm.

\begin{remark}\label{Remark1}
	Through some modifications,	for the class of piecewise linear functions, one can establish finite convergence for Algorithm \ref{B-GS-Alg}. Indeed, we need to follow the \emph{subgradient selection strategy} described in \cite{kiwielbook} to modify the updating rules of the index set $J_{k,i}$, such that it preserves the index of positive Lagrangian multipliers. Furthermore, we need to assume that some \emph{Haar} conditions are satisfied, and the objective function $f$ is bounded from below. For the sake of brevity, we omit to provide a detailed description, and we refer the reader to the second chapter of \cite{kiwielbook} for a comprehensive discussion. However, in our numerical experiments, we investigate the behavior of the B-GS algorithm on some piecewise linear function.
\end{remark}

\section{Numerical results}\label{Numerical results}
In this section, we asses the efficiency of the proposed method over a variety of convex test problems. To this end, we divide this section into four experiments. In the first experiment, some important features of the proposed method are illustrated. Next, we observe the finite convergence of our method for a piecewise linear objective. In the third experiment, we examine the sensitivity of the proposed method to the accuracy of supplied gradients, and in the last experiment, we conduct a comparison between some GS type methods.

In the following experiments, we use a set of nonsmooth convex test problems given in Table \ref{Table1}. Note that Problems 1-8 are scalable, in the sense that they can be formulated with any number of variables, while Problems 9-13 are some small scale problems that are used to provide some illustrations of the proposed method.  All experiments have been implemented in  \textsc{Matlab} software on PC Intel Core i5 2700k CPU 3.5 GHz and 8 GB of RAM.

\begin{table}[ht]
	\centering
	\caption{List of test problems}\label{Table1}

	\begin{tabular}{ c c c c c  } 
		
		\toprule[1pt]
		Problem&Name&$n$ & $f^*$ & Ref.  \\
		\hline \\
		1& Tilted Norm Function&any & $0$& \cite{Skaaja} \\
		2& Generalization of MXHILB  &any& $0$ & \cite{Haarala}  \\ 
		3& Chained LQ  &any& $-(n-1)\sqrt{2}$ & \cite{Haarala}   \\ 
		4& Chained CB3 I &any & $2(n-1)$&  \cite{Haarala}  \\ 
		5& Chained CB3 II &any& $2(n-1)$ & \cite{Haarala} \\
		6& Generalization of MAXQ & any  & $0$ & \cite{Haarala}  \\
		7& Generalization of MAXL& any & $0$& \cite{techreport2}\\
		8&  A Convex Partly Smooth Function& any   & $0$ & \cite{Skaaja}  \\
		9& QL & $2$ & $7.2$ & \cite{Makela_book}  \\ 
		10& Mifflin1 & $2$ & $-1$& \cite{Makela_book} \\
		11& MAXQ & $20$ & $0$  & \cite{Makela_book} \\
		12& Goffin& $50$& $0$  & \cite{Makela_book} \\
		13& Rosen& $4$  & $-44$& \cite{Makela_book} \\

		\bottomrule[1pt]
		
	\end{tabular}
	
\end{table}

By knowing the role of each parameter in the B-GS algorithm, the user can feel free to set them; however, the following choices are some safe values for the input parameters in order to observe acceptable behaviors in practice.

\begin{table}[H]
	\centering
	\begin{tabular}{ c c c c c c c c }

		Parameter &$\varepsilon_0$ & $\mu$ & $\alpha$ & $\gamma$& $m$& $\beta$& $\theta$  \\
		\hline \\
		Value & $1$& $0.5$& $0.5$& $0.9$& $\big[ \lceil n/10\rceil, 2n \big]$& $10^{-6}$& $0.9$

	\end{tabular}
\end{table}
As we already mentioned, the quality of initial polyhedral models is dependent on the sample size $m\in\mathbb{N}$. Typically, a large sample size increases the quality of the model, but at the cost of increased computation time. In this regard, if the size of a problem is small and gradient evaluations are not expensive, the user can work with a large sample size (e.g., $m=2n$). Otherwise, the user has to choose a smaller value for this input parameter. Accordingly, in Experiments 1-3 in which some small scale problems are considered, we set $m=2n$. On the other hand, in Experiment 4, where we consider some medium and large scale problems, we set $m=n/10$. 

We also note that, as reported in \cite{Burke2005}, in the implementation of GS type methods, the differentiability check need not be taken into account. Accordingly, in our implementations of Algorithm \ref{B-GS-Alg}, the differentiability checks are omitted. More precisely, the criterion in Line 4 is never checked, and instead of using \texttt{FDP1} and \texttt{FDP2} functions in Lines 16 and 19 , $\bm x_{k+1}$ and $\bm s_{k,m+i+1}$ are simply defined by $\bm x_k+\bm d_k$ and $\bm x_k+\bm d_{k,i}$, respectively.

\subsection*{Experiment 1.}
In this experiment, we present some illustrations of the proposed method, which reveals different aspects of the B-GS algorithm.  Figure~\ref{fig:figa} shows the iterations path of the B-GS method for QL and Mifflin1 problems (Problems 9 and 10 in Table \ref{Table1}). As we can see from this figure, when the method reaches the nonsmooth region, it starts to track a nonsmooth curve towards the minimum point, and the iterates do not fluctuate around the nonsmooth curves. 

\begin{figure}
	\centering
	\includegraphics[width=\linewidth]{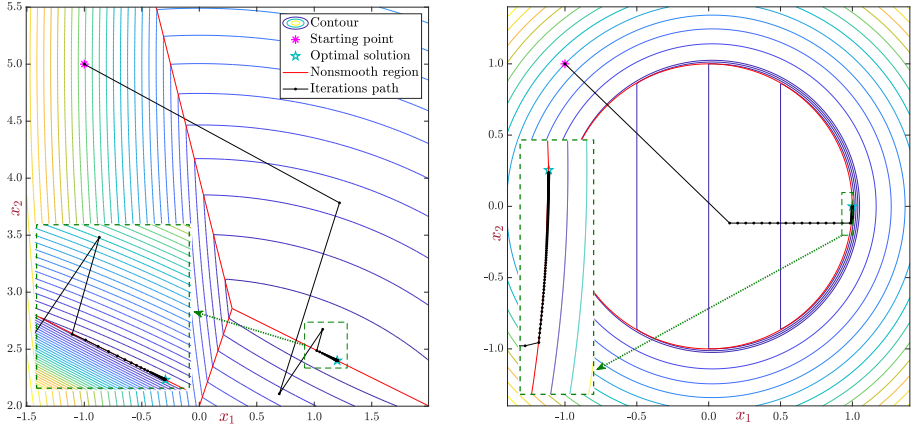}
	\caption{Contour plots and iterations path of the B-GS method, for QL (left) and Mifflin1 (right) problems.}
	\label{fig:figa}
\end{figure}

In Figure \ref{fig:figb}, the left vertical axes represent the function trials generated by the B-GS method using different initial sampling radii $\varepsilon_0=4$ (upper plot) and $\varepsilon_0=2^{-3}$ (lower plot), on MAXQ problem. At the same time, the right axes show  the current value of the sampling radius.
Moreover, to show the role of inner and outer iterations of Algorithm \ref{B-GS-Alg}, they are indicated by different symbols. 
In this way, Figure \ref{fig:figb} illustrates how the improvement process plays its role to
create a substantial reduction in the objective function $f$.
As observed from the plots of this figure, if our initial polyhedral model is not an adequate approximation to $f$, the improvement process starts to enhance the quality of the model either by reducing the sampling radius or by enriching the polyhedral model. Moreover, since, at the beginning of each iteration, the GS technique generates a rich set of auxiliary points, we see that after a few numbers of inner iterations, a reduction occurs in the objective function $f$.

\begin{figure}
	\centering
	\includegraphics[width=\linewidth]{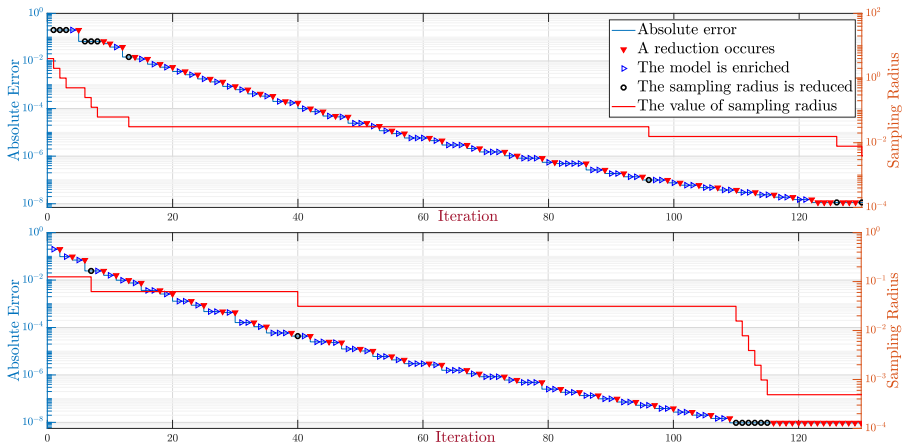}
	\caption{Top: (left axis) The role of inner and outer iterations in the B-GS algorithm using $\varepsilon_0=4$ against the absolute error $ f(\bm x_{k})-f^*$, for MAXQ problem. (right axis)  The current value of the sampling radius. 
		Bottom: The same for $\varepsilon_0=2^{-3}$.	}
	\label{fig:figb}
\end{figure}

It can also be seen from the upper plot of Figure \ref{fig:figb} that, If the initial sampling radius $\varepsilon_0$  is chosen to be large (here $\varepsilon_0=4$), 
the method smartly reduces the value of this input parameter, until it finds the efficient one. Next, the method starts its normal process to reduce the value of the objective function. As a result, the user need not be worried about choosing a large initial sampling radius because the B-GS method smartly finds the suitable one. 

A glance at last iterations of Figure \ref{fig:figb} reveals that when the method reaches its final accuracy (here $10^{-8}$), reducing the sampling radius does not make a significant reduction in the objective function $f$. Indeed, we only see very short steps towards the minimum point. In this regard, as in GS type methods, it is reasonable to terminate the algorithm when the sampling radius becomes smaller than a prespecified tolerance.

\subsection*{Experiment 2.}
As mentioned in Remark \ref{Remark1}, under some subtle modifications, the B-GS method is finitely convergent for the class of piecewise linear functions. From numerical point of view, the B-GS method minimizes such problems with high accuracy (close to the machine accuracy),  and generally, a big jump to the minimum point occurs.
To observe such  behavior in practice, we consider the piecewise linear problem Goffin (Problem 12 in Table \ref{Table1}), which is defined by
$$G(\bm x)=\max_{1\leq i\leq 50} x_i-\sum_{i=1}^{50}x_i. $$
Figure \ref{fig:figc} demonstrates how the B-GS method treats the Goffin problem. It can be seen from this figure that, after a number of iterations, a big jump to the minimum point occurs, which confirms the finite convergent property of the proposed method for the class of piecewise linear functions.

\begin{figure}[h]
	\centering
	\includegraphics[width=\linewidth]{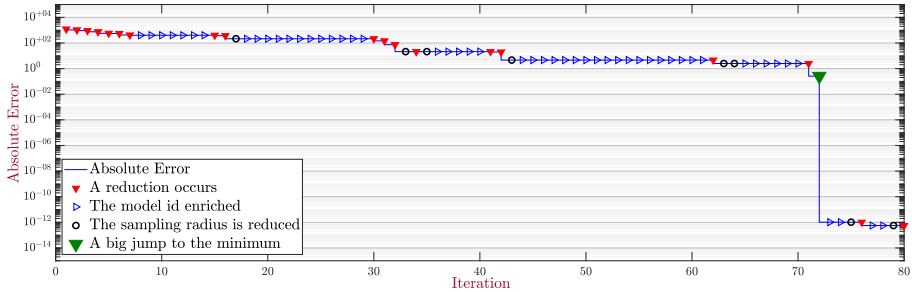}
	\caption{The role of inner and outer iterations of the B-GS method  against the absolute error $ f(\bm x_{k})-f^*$ when applied to the Goffin test problem, which shows a big jump to the minimum point. 	}
	\label{fig:figc}
\end{figure}

\subsection*{Experiment 3.}
In many of nonsmooth optimization methods (e.g., bundle methods) the accuracy of supplied (sub)gradients is an extremely delicate issue. In this experiment, we intend to examine the sensitivity of the B-GS method to the accuracy of gradients. To this end, let us apply the B-GS method to the Rosen problem (Problem 13 in Table \ref{Table1}) by considering two cases. In the first case, using the analytical form of the gradient map, we supply the gradients exactly, while in the second case, we use the \emph{forward difference formula} with $h=10^{-9}$. The upper plots of Figure \ref{fig:fig0} indicate the function trials of the B-GS method using exact (left) and approximate (right) gradients, for five randomly generated starting points. It is observed that even when a simple difference formula supplies gradients, the B-GS method still provides an accurate approximation of the minimum point. This can be attributed to the fact that the GS technique provides an adequate knowledge of the $\varepsilon_k$-subdifferential at the beginning of each iteration. Of course, as seen from the left plot, using exact gradients results in more accurate solutions. 

To make a comparison,  the same experiment is performed for the Bundle-Trust (BT \cite{BT-method}) method. The obtained results have been illustrated in the lower plots of Figure \ref{fig:fig0}.  As the left plot shows, the method works well as long as the gradients are supplied exactly. However, it can be seen from the right plot that, the BT method fails to make significant progress towards the minimum point when a simple difference formula supplies gradients. This is due to the fact that the knowledge of the BT method from the nearby nonsmooth curve(s) is not rich enough to handle a margin of gradient errors.\\

\begin{figure}
	\centering
	\includegraphics[width=\linewidth]{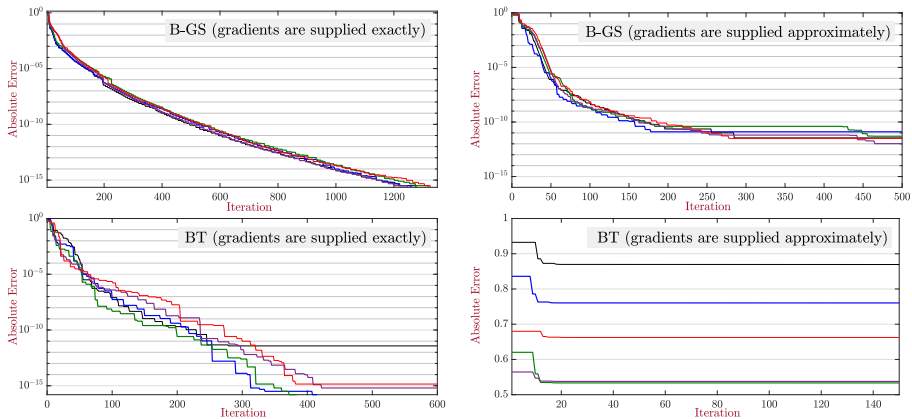}
	\caption{Upper plots: function trials generated by the B-GS method using exact (left) and approximate (right) gradients, for five randomly generated starting points. Lower plots: the same for the Bundle Trust (BT) method. }
	\label{fig:fig0}
\end{figure}

\subsection*{Experiment 4.}
In this experiment, we examine the performance of the B-GS, AGS and GS methods on Problems 1-8. We recall that all of these problems can be formulated with any number of variables. However, in this experiment, we chose $n=50, 100, 200$ and $500$. At first, we briefly review the GS and AGS methods. \\

\textbf{Gradient Sampling Method} \cite{Burke2005}. The main strategy of the GS method is to use the set of sampled points to approximate the $\varepsilon$-steepest descent direction. Then, a standard Armijo line search is applied to make a substantial reduction in the objective function. This method is robust and can be applied to a broad range of nonsmooth problems, but to guarantee the convergence of the method, at each iteration, we need a set of sampled points of size $n+1$(or more). In particular, as reported in \cite{Burke2005} and used in our experiments, to observe nice practical behavior, we need to set the size of the sample to $2n$, which makes the method computationally extensive. In this experiment, we used the \textsc{Matlab} code of the GS algorithm which is freely available (see \cite{Burke2005}). \\

\textbf{Adaptive Gradient Sampling Method} \cite{Curtis2013}. The main idea of the Adaptive Gradient Sampling (AGS) method is to use the set of sampled points to approximate the Hessian of $f$ at the current point in order to construct a quadratic model for the objective function $f$. The method uses this quadratic model to generate a search direction, and then a standard Armijo line search checks the efficiency of this direction.  One important feature of the AGS method is that the size of the sample is independent of the dimension of a problem.  Several versions of the AGS algorithm has been proposed in \cite{Curtis2013}. In this experiment, we consider the AGS-LBFGS variant, which has a guaranteed convergence analysis. Following \cite{Curtis2013}, we set  the size of the sample to $ n/10$. In this experiment, the \textsc{Matlab} code of the AGS-LBFGS algorithm has been written by the authors. \\

\setlength{\tabcolsep}{1pt}			
\begin{table}[h]
	\centering
	\caption{Numerical results for Experiment 4.}\label{Table2}
	
	\begin{tabular}{cllccclccclccclccc}
		\toprule[1pt]
		\rule{0pt}{3ex}&  &  & \multicolumn{3}{c}{Iters} &  & \multicolumn{3}{c}{$g_{\rm{eval}}$} &  & \multicolumn{3}{c}{Time(s)} &  & \multicolumn{3}{c}{$E_{\rm{final}}$} \\ 
		\cline{4-6}\cline{8-10}\cline{12-14}\cline{16-18}
		\rule{0pt}{3ex}Problem & \,$n$ &  & B-GS & AGS & GS &  & B-GS & AGS & GS &  & B-GS & AGS & GS &  & B-GS & AGS & GS \\ 
		\cline{1-2}\cline{4-6}\cline{8-18}
		\rule{0pt}{3ex}1 & 50 &  & 6 & 31 & 86 &  & 39 & 183 & 8600 &  & 0.6 & 0.7 & 2.3 &  & 2e-6 & 2e-4 & 2e-4 \\ 
		2 & 50 &  & 262 & 225 & 72 &  & 3189 & 1485 & 7250 &  & 11.4 & 3.0 & 2.4 &  & 4e-4 & 4e-4 & 3e-4 \\ 
		3 & 50 &  & 22 & 57 & 80 &  & 240 & 340 & 8050 &  & 0.8 & 1.1 & 3.4 &  & 4e-4 & 4e-4 & 4e-4 \\ 
		4 & 50 &  & 31 & 46 & 60 &  & 221 & 273 & 6050 &  & 1.2 & 1.5 & 6.1 &  & 4e-4 & 4e-4 & 2e-4 \\ 
		5 & 50 &  & 32 & 35 & 36 &  & 220 & 210 & 3600 &  & 0.8 & 0.9 & 3.6 &  & 4e-4 & 3e-4 & 3e-4 \\ 
		6 & 50 &  & 102 & 209 & 19 &  & 612 & 1252 & 1950 &  & 1.1 & 2.2 & 0.6 &  & 4e-4 & 4e-4 & 2e-4 \\ 
		7 & 50 &  & 63 & 118 & 51 &  & 419 & 706 & 5050 &  & 1.0 & 1.1 & 1.2 &  & 3e-4 & 4e-4 & 4e-4 \\ 
		8 & 50 &  & 12 & 14 & 72 &  & 73 & 82 & 7250 &  & 0.4 & 0.5 & 2.1 &  & 1e-6 & 4e-4 & 4e-4 \\ 
		\hline
		\rule{0pt}{3ex}1 & 100 &  & 8 & 41 & 102 &  & 94 & 449 & 20500 &  & 0.6 & 1.2 & 6.8 &  & 2e-5 & 4e-4 & 2e-4 \\ 
		2 & 100 &  & 341 & 532 & 101 &  & 7050 & 5769 & 20200 &  & 16.1 & 9.2 & 11.5 &  & 4e-4 & 4e-4 & 4e-4 \\ 
		3 & 100 &  & 24 & 36 & 89 &  & 268 & 395 & 17900 &  & 1.6 & 2.0 & 12.0 &  & 4e-4 & 4e-4 & 4e-4 \\ 
		4 & 100 &  & 43 & 75 & 262 &  & 740 & 823 & 52500 &  & 3.9 & 11.6 & 129.5 &  & 4e-4 & 3e-4 & 4e-4 \\ 
		5 & 100 &  & 27 & 24 & 19 &  & 323 & 263 & 3800 &  & 1.3 & 1.3 & 9.7 &  & 4e-4 & 3e-4 & 3e-4 \\ 
		6 & 100 &  & 207 & 171 & 22 &  & 2277 & 1878 & 4500 &  & 2.0 & 2.1 & 1.3 &  & 4e-4 & 4e-4 & 4e-4 \\ 
		7 & 100 &  & 108 & 152 & 47 &  & 1265 & 1670 & 9400 &  & 1.1 & 1.5 & 2.5 &  & 3e-4 & 4e-4 & 4e-4 \\ 
		8 & 100 &  & 12 & 17 & 112 &  & 133 & 183 & 22500 &  & 0.4 & 0.6 & 9.0 &  & 1e-6 & 4e-4 & 4e-4 \\ 
		\hline
		\rule{0pt}{3ex}1 & 200 &  & 12 & 39 & - &  & 260 & 816 & - &  & 0.7 & 2.1 & - &  & 3e-5 & 2e-4 & - \\ 
		2 & 200 &  & 794 & 1027 & - &  & 26760 & 21421 & - &  & 114.4 & 66.5 & - &  & 4e-4 & 4e-4 & - \\ 
		3 & 200 &  & 27 & 49 & - &  & 694 & 1028 & - &  & 3.5 & 8.6 & - &  & 4e-4 & 4e-4 & - \\ 
		4 & 200 &  & 38 & 40 & - &  & 816 & 829 & - &  & 14.3 & 19.2 & - &  & 4e-4 & 4e-4 & - \\ 
		5 & 200 &  & 33 & 35 & - &  & 711 & 734 & - &  & 7.1 & 8.7 & - &  & 4e-4 & 4e-4 & - \\ 
		6 & 200 &  & 415 & 335 & - &  & 8736 & 7049 & - &  & 8.6 & 7.4 & - &  & 4e-4 & 4e-4 & - \\ 
		7 & 200 &  & 193 & 205 & - &  & 4163 & 4303 & - &  & 4.1 & 4.8 & - &  & 2e-4 & 4e-4 & - \\ 
		8 & 200 &  & 12 & 13 & - &  & 253 & 271 & - &  & 0.7 & 0.9 & - &  & 3e-6 & 2e-4 & - \\ 
		\hline
		\rule{0pt}{3ex}1 & 500 &  & 18 & 23 & - &  & 927 & 1172 & - &  & 4.9 & 23.7 & - &  & 1e-4 & 4e-3 & - \\ 
		2 & 500 &  & 865 & 657 & - &  & 46695 & 33436 & - &  & 834.5 & 1051.3 & - &  & 4e-3 & 4e-3 & - \\ 
		3 & 500 &  & 10 & 11 & - &  & 518 & 561 & - &  & 11.8 & 14.7 & - &  & 3e-3 & 4e-3 & - \\ 
		4 & 500 &  & 20 & 16 & - &  & 1057 & 816 & - &  & 67.3 & 55.4 & - &  & 4e-3 & 4e-3 & - \\ 
		5 & 500 &  & 15 & 29 & - &  & 776 & 1477 & - &  & 45.8 & 135.2 & - &  & 4e-3 & 3e-3 & - \\ 
		6 & 500 &  & 597 & 362 & - &  & 30449 & 18458 & - &  & 119.1 & 89.8 & - &  & 4e-3 & 4e-3 & - \\ 
		7 & 500 &  & 196 & 344 & - &  & 10267 & 17542 & - &  & 42.7 & 69.4 & - &  & 4e-3 & 4e-3 & - \\ 
		8 & 500 &  & 6 & 7 & - &  & 313 & 356 & - &  & 3.4 & 6.6 & - &  & 1e-3 & 1e-3 & - \\ 
		\bottomrule[1pt]
	\end{tabular}
\end{table}

Now, we turn to the details of our simulations. 
We stop an algorithm whenever one of the following conditions is met
\begin{itemize}
	\item[1.] If the relative error $$E_k:=\frac{f(\bm x_k)-f^*}{\lvert f^*\rvert +1},$$
	becomes smaller than some  positive tolerance $\epsilon>0$. In this experiment, we chose $\epsilon=5\times10^{-4}$ for $n\leq200$, and $\epsilon=5\times10^{-3}$ for $n=500$.
	\item[2.] If the number of iterations exceeds 1000. Note that, for the B-GS method, the number of inner iterations are not taken into account.
	\item[3.] If the value of the sampling radius becomes smaller than $10^{-12}$.
\end{itemize}
Due to the stochastic nature of the algorithms, we run each problem five times using a starting point randomly generated from a ball centered at $\bm x_0$ (suggested in the literature) with radius $(\lVert \bm x_0\rVert+1)/n$, and the average of the results are reported.

Table \ref{Table2} compares the obtained results for the considered set of test problems. In this table, the final relative error $E_k$ and the number of gradient evaluations are denoted by $E_{\rm{final}}$ and $g_{\rm{eval}}$, respectively. These results clearly demonstrate that the B-GS method outperforms the two other methods in the sense of the number of gradient evaluations and CPU Time. We also observe that, in the GS method,  as the number of variables increases the number of gradient evaluations increases significantly. This feature makes this method inefficient for solving large scale problems. In this regard, we did not apply this method to the problems with $n>100$.

\section{Conclusion}\label{Conclusion}
In this paper, we combined the effective and important features of the bundle and GS approaches, to develop  a variant of the bundle methods for minimizing  nonsmooth convex functions. 
Moreover,  we studied the convergence of the proposed method for the class of convex functions under the assumptions that are much weaker than those in the GS type methods.
The presented method applied to a variety of benchmark nonsmooth convex problems and a comprehensive report of the numerical results was presented.
Based on these results, one can find that the proposed method requires much fewer gradient evaluations in comparison with GS type methods. 
Furthermore, we observed that the method is able to locate the minimizer even using approximate gradients, which is not the case in the family of bundle methods. 
These encouraging results motivate us to generalize the presented  algorithm to the class of nonconvex locally Lipschitz functions in the next work.

\bibliographystyle{spmpsci}
\bibliography{maleknia}

\begin{thebibliography}{10}
\providecommand{\url}[1]{{#1}}
\providecommand{\urlprefix}{URL }
\expandafter\ifx\csname urlstyle\endcsname\relax
  \providecommand{\doi}[1]{DOI~\discretionary{}{}{}#1}\else
  \providecommand{\doi}{DOI~\discretionary{}{}{}\begingroup
  \urlstyle{rm}\Url}\fi

\bibitem{Poly-approx}
Astorino, A., Gaudioso, M.: Polyhedral separability through successive lp.
\newblock J. Optim. Theory Appl \textbf{112}(2), 265--293 (2002)

\bibitem{Bagirov2014}
Bagirov, A.M., Karmitsa, N., M\"{a}kel\"{a}, M.M.: Introduction to nonsmooth
  optimization.
\newblock Springer (2014)

\bibitem{Boyd}
Boyd, S., Vandenberghe, L.: Convex Optimization.
\newblock Cambridge University Press (2004)

\bibitem{GS-full}
Burke, J.V., Curtis, F.E., Lewis, A.S., Overton, M.L., Sim\~{o}es, L.E.A.:
  Gradient sampling methods for nonsmooth optimization.
\newblock arXiv : 1804.11003  (2018)

\bibitem{Burke2005}
Burke, J.V., Lewis, A.S., Overton, M.L.: {A robust gradient sampling algorithm
  for nonsmooth, nonconvex optimization}.
\newblock SIAM J. Optim \textbf{15}(3), 751--779 (2005)

\bibitem{Curtis2012}
Curtis, F.E., Overton, M.L.: A sequential quadratic programming algorithm for
  nonconvex, nonsmooth constrained optimization.
\newblock SIAM J. Optim \textbf{22}(2), 474--500 (2012)

\bibitem{Curtis2013}
Curtis, F.E., Que, X.: An adaptive gradient sampling algorithm for nonconvex
  nonsmooth optimization.
\newblock Optim. Methods Softw \textbf{28}(6), 1302--1324 (2013)

\bibitem{Evans2015}
Evans, L.C., Gariepy, R.F.: Measure theory and fine properties of functions,
  Revised Edition.
\newblock CRC Press (1992)

\bibitem{Haarala}
Haarala, M., Miettinen, K., M\"{a}kel\"{a}, M.M.: New limited memory bundle
  method for large-scale nonsmooth optimization.
\newblock Optim. Methods Softw \textbf{19}(6), 673--692 (2004)

\bibitem{Fast-GS}
Helou, E.S., Santos, A.S., Sim\~{o}es, L.E.A.: A fast gradient and function
  sampling method for finite-max functions.
\newblock Computational Optimization and Applications \textbf{71}(3), 673–717
  (2018)

\bibitem{Diff-Check}
Helou, E.S., Santos, A.S., Sim\~{o}es, L.E.A.: On the differentiability check
  in gradient sampling methods.
\newblock Optim. Methods Softw \textbf{31}(5), 983--1007 (206)

\bibitem{Aggregate-sub}
Kiwiel, K.: An aggregate subgradient method for nonsmooth convex minimization.
\newblock Math. Program \textbf{27}, 320--341 (1983)

\bibitem{kiwielbook}
Kiwiel, K.C.: Methods of descent for nondifferentiable optimization.
\newblock Springer-Verlag (1985)

\bibitem{Proximal-Bundle}
Kiwiel, K.C.: Proximity control in bundle methods for convex nondifferentiable
  minimization.
\newblock Math. Program \textbf{46}, 105--122 (1990)

\bibitem{Kiwiel2007}
Kiwiel, K.C.: Convergence of the gradient sampling algorithm for nonsmooth
  nonconvex optimization.
\newblock SIAM J. Optim \textbf{18}(2), 379--388 (2007)

\bibitem{Kiwiel2010}
Kiwiel, K.C.: A nonderivative version of the gradient sampling algorithm for
  nonsmooth nonconvex optimization.
\newblock SIAM J. Optim \textbf{20}(4), 1983--1994 (2010)

\bibitem{bio-ch2}
Lavor, C., Liberti, L., Maculan, N.: Molecular distance geometry problem (2nd
  ed.).
\newblock Springer (2009)

\bibitem{techreport2}
Luk\v{s}an, L., Tcma, M., Siska, M., Vl\v{c}ek, J., Ramesova, N.: Ufo 2002.
  interactive system for universal functional optimization.
\newblock Tech. rep., Institute of Computer Science, Academy of Sciences of the
  Czech Republic (2002)

\bibitem{Bundle-semidefinite}
Lv, J., Pang, L., Xu, N.e.a.: An infeasible bundle method for nonconvex
  constrained optimization with application to semi-infinite programming
  problems.
\newblock Computational Optimization and Applications \textbf{80}(2), 397--427
  (2019)

\bibitem{CG-GS}
Mahdavi-Amiri, N., Shaeiri, M.: A conjugate gradient sampling method for
  nonsmooth optimization.
\newblock 4OR pp. 1--18 (2019)

\bibitem{Makela_book}
M\"{a}kel\"{a}, M.M., Neittaanm\"{a}ki, P.: Nonsmooth optimization: analysis
  and algorithms with applications to optimal control.
\newblock Singapore: World Scientific Publishing Co. (1992)

\bibitem{image}
Nikolova, M.: Minimizers of cost-functions involving nonsmooth data-fidelity
  terms. application to the processing of outliers.
\newblock SIAM J. Numer. Anal \textbf{40}(3), 965--994 (2002)

\bibitem{Zowe-book}
Outrata, J., Kocvara, M., Zowe, J.: Nonsmooth approach to optimization problems
  with equilibrium constraints.
\newblock Springer (1998)

\bibitem{BT-method}
Schramm, H., Zowe, J.: A version of the bundle idea for minimizing a nonsmooth
  function: conceptual idea, convergence analysis, numerical results.
\newblock SIAM J. Optim \textbf{2}(1), 105--122 (1992)

\bibitem{shorbook}
Shor, N.Z.: Minimization methods for non-differentiable functions.
\newblock Springer (1985)

\bibitem{Skaaja}
Skaaja, M.: Limited memory {BFGS} for nonsmooth optimization.
\newblock Master's thesis, New York University (2010)

\bibitem{data-mining}
Witten, I.H., Frank, E.: Data mining: practical machine learning tools and
  techniques (2nd ed.).
\newblock Elsevier Inc (2005)

\end{thebibliography}

\end{document}